\newtheorem{Theorem}{\indent \sc Theorem}
\newtheorem{Lemma}{\indent \sc Lemma}
\newtheorem{cor}{\indent \sc Corollary}
\theoremstyle{remark}
\newtheorem*{Remark}{\indent \sc Remark}
\newcommand{\eps}{\varepsilon}
\newcommand{\R}{\mathbb R}
\newcommand{\N}{\mathbb N}
\newcommand{\E}{{\sf E}}
\newcommand{\D}{{\sf Var}}
\newcommand{\Prob}{{\sf P}}
\renewcommand{\le}{\leqslant}
\renewcommand{\ge}{\geqslant}
\newcommand{\I}{{\bf1}}
\newcommand{\abs}[1]{\left|#1\right|}
\newcommand{\wave}[1]{\widetilde{#1}}
\newcommand{\G}{\mathcal G}
\newcommand{\abe}{C_{\textrm{\tiny AB}}} 
\newcommand{\lowaex}{{\underline{C_{\textrm{\tiny AE}}}}}
\newcommand{\aex}{C_{\textrm{\tiny AE}}} 
\newcommand{\upaex}{\overline{C}_{\textrm{\tiny AE}}} 
\newcommand{\upaexcond}{C_{\textrm{\tiny AE}}^*} 
\newcommand{\qt}{\sigma^2}
\newcommand{\gopt}{\gamma_*}
\newcommand{\scE}{{\textrm{\tiny E}}}
\newcommand{\scR}{{\textrm{\tiny R}}}
\newcommand{\es}{L_{\scE,n}}
\newcommand{\roz}{L_{\scR,n}}
\newcommand{\essC}{A_\scE}
\newcommand{\rozC}{A_\scR}
\newcommand{\CE}{C_\scE}
\newcommand{\CR}{C_\scR}
\newcommand{\aexess}{\essC^\textrm{\tiny AE}}
\newcommand{\aexroz}{\rozC^\textrm{\tiny AE}}
\newcommand{\osiM}{\Lambda_n}
\newcommand{\essM}{M_n}
\newcommand{\gmmalow}{\Upsilon}
\newcommand{\gmmahigh}{\Gamma}
\date{}
\title{Asymptotically exact constants in natural convergence rate estimates in the Lindeberg theorem\thanks{This research was supported by the Russian Foundation for Basic Research (project 19-07-01220-a) and by the Ministry for Education and Science of Russia (grant No.~MD--5748.2021.1.1).}}
\author{Ruslan Gabdullin\thanks{Lomonosov Moscow State University, Moscow, Russia}, Vladimir Makarenko\thanks{Lomonosov Moscow State University, Moscow, Russia}, and Irina Shevtsova\footnote{Corresponding author. Hangzhou Dianzi University, Hangzhou, China. Lomonosov Moscow State University, Moscow, Russia. Institute of Informatics Problems of Federal Research Center ``Computer Science and Control,'' Russian Academy of Sciences, Moscow, Russia. E-mail: ishevtsova@cs.msu.ru}}
\begin{document}

\maketitle

\begin{abstract}
Following (Shevtsova, 2010) we introduce detailed classification of the asymptotically exact constants in natural estimates of the rate of convergence in the Lindeberg central limit theorem, namely in  Esseen's, Rozovskii's, and Wang--Ahmad's inequalities and their structural improvements obtained in our previous works. The above inequalities involve algebraic truncated third-order moments and the classical Lindeberg fraction and assume finiteness only the second-order moments of random summands. We present lower bounds for the introduced asymptotically exact constants as well as for the universal and for the most optimistic constants which turn to be not far from the upper ones. 
\end{abstract}

{\bf keywords:} central limit theorem, Lindeberg's theorem, normal approximation, asymptotically exact constant, asymptotically best constant, uniform distance, Lindeberg fraction, truncated moment, absolute constant

\section{Introduction}

Let $X_1, X_2, \ldots, X_n$ be independent random variables  (r.v.') with distribution functions (d.f.'s) $F_k(x) = \Prob(X_k < x)$, $x\in\R,$ expectations $\E X_k = 0$, variances $\sigma_k^2 = \D X_k$, $k=1,\ldots,n$, and such that
$$
B_n^2:= \sum_{k=1}^n \sigma_k^2>0.
$$
For $n=1,2,\ldots$ denote
$$
S_n = X_1 + X_2 + \cdots + X_n, \quad 
\wave{S}_n = \frac{S_n-\E S_n}{\sqrt{\D S_n}} = \sum_{k=1}^n \frac{X_k}{B_n},
$$
$$
\Phi(x) = \frac{1}{\sqrt{2\pi}} \int_{-\infty}^x e^{-t^2/2}\,dt, \ x \in \R,
\quad
\Delta_n= \Delta_n(F_1,\ldots,F_n)= \sup_{x \in \R} \abs{\Prob(\wave{S}_n < x)- \Phi(x)},
$$
$$
\sigma_k^2(z) = \E X_k^2 \I(\abs{X_k} \ge z), \quad \mu_k(z) = \E X_k^3 \I(\abs{X_k} < z),\quad k=1,\ldots,n
$$
$$
\essM(z):=\frac1{B_n^3}\sum_{k=1}^n \mu_k(zB_n) = \frac1{B_n^3}\sum_{k=1}^n \E X_k^3\I(|X_k|<zB_n),
$$
$$
\osiM(z)\coloneqq  \frac1{B_n^3}\sum_{k=1}^n\E|X_k|^3\I(|X_k|<z B_n),
$$
$$
L_n(z) = \frac{1}{B_n^2} \sum_{k=1}^n \sigma_k^2(zB_n) = \frac{1}{B_n^2} \sum_{k=1}^n \E X_k^2\I (\abs{X_k} \ge zB_n), \quad z>0.
$$
The function $L_n(\,\cdot\,)$ is called the \textit{Lindeberg fraction}. It is easy to see that $|\essM(z)|\le\osiM(z)$, $z>0$. In case of independent identically distributed (i.i.d.) r.v.'s $X_1,\ldots,X_n$ we denote their common d.f. by $F$ and write $\Delta_n(F):=\Delta_n(F,\ldots,F)$. 


In~\cite{GabdullinMakarenkoShevtsova2018} it was proved that
\begin{equation} \label{EsseenTypeIneq2018}
\Delta_n \le \essC(\eps,\gamma)\sup_{0 < z < \eps} \left\{ \gamma\abs{\essM(z)} + zL_n(z) \right\},
\end{equation}
\begin{equation} \label{RozovskiiTypeIneq2018}
\Delta_n \le \rozC(\eps,\gamma)\Big(\gamma\abs{\essM(\eps)} + \sup_{0 < z < \eps} zL_n(z) \Big),\quad \eps,\,\gamma>0,\quad n\in\N,
\end{equation}
where the functions $\essC(\eps,\gamma),\,\rozC(\eps,\gamma)$ depend only on $\eps$ and $\gamma$ (that is, they turn into absolute constants as soon as $\eps$ and $\gamma$ are fixed), both are monotonically non-increasing with respect to~$\gamma>0$,  and  $\essC(\eps,\gamma)$ is also non-increasing with respect to~$\eps>0$. The question on the boundedness of $\rozC(\eps,\gamma)$ as $\eps\to\infty$ is still open, while $\essC(0+,\gamma)=\rozC(0+,\gamma)=\infty$  for every $\gamma>0$ and $\essC(\eps,0+)=\rozC(\eps,0+)=\infty$ for every $\eps>0$. To avoid ambiguity, in what follows by constants appearing here in various inequalities we mean their exact values; in particular, in majorizing expressions --- their least possible values.
Upper bounds for the constants $\essC(\eps,\gamma)$ and $\rozC(\eps,\gamma)$ for some $\eps$ and $\gamma$ computed in~\cite{GabdullinMakarenkoShevtsova2018}  are presented in tables~\ref{Tab:EsseenC1(L)} and~\ref{Tab:RozovskiiC1(L)}, respectively. Here the symbol $\gopt$ stands for the point of minimum of the upper bound $\rozC(\eps,\gamma)$, obtained within the framework of the method used in~\cite{GabdullinMakarenkoShevtsova2018}, that is, the bound $\rozC(\eps,\gamma)$  found in~\cite{GabdullinMakarenkoShevtsova2018} remains constant as $\gamma\ge\gopt$ grows for every fixed $\eps>0$. More precisely, the quantity~$\gamma_*$ is defined as follows:
$$
\gopt=1/\sqrt{6\varkappa}=0.5599\ldots,
$$
where
$$
\varkappa=x^{-2}\sqrt{(\cos x-1+x^2/2)^2+(\sin x-x)^2}\Big|_{x=x_0}=0.5315\ldots,
$$
$x_0=5.487414\ldots$ is the unique root of the equation
$$
8(\cos x - 1) + 8x\sin x - 4x^2\cos x - x^3\sin x = 0,\quad x\in(\pi,2\pi).
$$

\begin{table}[h]
\begin{center}
\begin{tabular}{||c|c|c||c|c|c||c|c|c||}
\hline
$\eps$&$\gamma$&$\essC(\eps,\gamma)$& $\eps$&$\gamma$&$\essC(\eps,\gamma)$& $\eps$&$\gamma$&$\essC(\eps,\gamma)$
\\ \hline 
$1.21$&$0.2$&$2.8904$&$\infty$&$\gopt$&$2.6919$&$2.65$&$4$&$2.6500$\\ \hline 
$1.24$&$0.2$&$2.8900$&$1$&$0.72$&$2.7298$&$2.74$&$3$&$2.6500$\\ \hline 
$\infty$&$0.2$&$2.8846$&$1$&$\infty$&$2.7286$&$3.13$&$2$&$2.6500$\\ \hline 
$1.76$&$0.4$&$2.7360$&$4.35$&$1$&$2.6600$&$4$&$1.62$&$2.6500$\\ \hline 
$5.94$&$0.4$&$2.7300$&$\infty$&$1$&$2.6588$&$5.37$&$1.5$&$2.6500$\\ \hline 
$\infty$&$0.4$&$2.7299$&$\infty$&$0.97$&$2.6599$&$\infty$&$1.43$&$2.6500$\\ \hline 
$1$&$\gopt$&$2.7367$&$2.56$&$\infty$&$2.6500$&$\infty$&$\infty$&$2.6409$\\ \hline 
$1.87$&$\gopt$&$2.6999$&$2.62$&$5$&$2.6500$&$0+$&$\forall$&$\infty$\\ \hline 
\end{tabular}
\end{center}
\caption{Upper bounds for the constants $\essC(\eps,\gamma)$ from inequality~\eqref{EsseenTypeIneq2018}, $\gopt=0.5599\ldots$}
\label{Tab:EsseenC1(L)}
\end{table}

\begin{table}[h]
\begin{center}
\begin{tabular}{||c|c|c||c|c|c||}
\hline
$\eps$&$\gamma$&$\rozC(\eps,\gamma)$&
$\eps$&$\gamma$&$\rozC(\eps,\gamma)$
\\ \hline
$1.21$&$0.2$&$2.8700$&$1.99$&$\gopt$&$2.6600$\\ \hline 
$5.39$&$0.2$&$2.8635$&$2.12$&$\gopt$&$2.6593$\\ \hline 
$1.76$&$0.4$&$2.6999$&$3$&$\gopt$&$2.6769$\\ \hline 
$2.63$&$0.4$&$2.6933$&$5$&$\gopt$&$2.7562$\\ \hline 
$0.5$&$\gopt$&$3.0396$&$0+$&$\forall$&$\infty$\\ \hline 
$1$&$\gopt$&$2.7286$&&&\\ \hline 
\end{tabular}
\end{center}
\caption{Upper bounds for the constants $\rozC(\eps,\gamma)$  from inequality~\eqref{RozovskiiTypeIneq2018}, $\gopt=0.5599\ldots$}
\label{Tab:RozovskiiC1(L)}
\end{table}


In the same paper~\cite{GabdullinMakarenkoShevtsova2018} there were found sharpened upper bounds for the constants $\essC(\eps,\gamma)$ and $\rozC(\eps,\gamma)$ provided that the corresponding fractions
$$
\es(\eps, \gamma):=\sup_{0 < z < \eps} \left\{ \gamma\abs{\essM(z)} + zL_n(z) \right\},\quad \roz(\eps, \gamma):=\gamma\abs{\essM(\eps)} + \sup_{0 < z < \eps} zL_n(z).
$$
take small values. In particular, there were introduced asymptotically exact constants 
\begin{equation}\label{aexCessDef}
\aexess(\eps,\gamma):= \limsup_{\ell \to 0} \sup_{n,F_1,\ldots,F_n} \left\{ \frac{\Delta_n(F_1,\ldots,F_n)}{\ell}\colon \es(\eps,\gamma)=\ell \right\},\quad \eps,\ \gamma>0,
\end{equation}
\begin{equation}\label{aexCrozDef}
\aexroz(\eps,\gamma):=\limsup_{\ell \to 0} \sup_{n,F_1,\ldots,F_n} \left\{ \frac{\Delta_n(F_1,\ldots,F_n)}{\ell}\colon \roz(\eps,\gamma)=\ell \right\},\quad \eps,\ \gamma>0,
\end{equation}
and the following upper bounds  were obtained for them:
\begin{eqnarray}\label{aexess(eps,gamma)<=}
\aexess(\eps,\gamma)&\le&\frac{4}{\sqrt{2\pi}}+ \frac{1}{\pi}  \bigg[\frac{\varkappa}{\eps}\gmmalow\Big(1,\frac{t_\gamma^2}{2\eps^2}\Big) +\frac{\eps}{12}\gmmalow\Big(2,\frac{t_\gamma^2}{2\eps^2}\Big) +  
\frac{\sqrt{2(6\varkappa\gamma^2+1)}} {6\gamma}\, \gmmahigh\Big(\frac{3}{2},\frac{t_\gamma^2}{2\eps^2}\Big) \bigg],
\\\nonumber
\aexroz(\eps,\gamma)&\le&\frac{4}{\sqrt{2\pi}}+ \frac{1}{\pi}  \Big[
\frac{\varkappa}{\eps}
\gmmalow\Big(1,\frac{t_{1,\gamma}^{2}}{2\eps^2}\Big) +\frac{\eps}{12}
\gmmalow\Big(2,\frac{t_{1,\gamma}^{2}}{2\eps^2}\Big)+
\frac{\eps}{6}\gmmahigh\Big(2,\frac{t_{2,\gamma}^{2}}{2\eps^2}\Big)+
\\&&
\label{aexroz(eps,gamma)<=}
+\frac{\sqrt2}{6\gamma}\Big(\frac{\sqrt\pi}{2}
-\gmmalow\Big(\frac32,\frac{t_{1,\gamma}^{2}}{2\eps^2}\Big)
-\gmmahigh\Big(\frac32,\frac{t_{2,\gamma}^{2}}{2\eps^2}\Big)\Big)
\Big],
\end{eqnarray}
where $\gmmahigh(r,x) \coloneqq \int_{x}^\infty t^{r-1} e^{-t} dt,$
$\gmmalow(r,x)  \coloneqq\int_0^{x} t^{r-1} e^{-t} dt =\gmmahigh(r)
- \gmmahigh(r,x),$ $r,x>0,$ are the upper and the lower gamma-functions, respectively,
$$
t_\gamma\coloneqq \tfrac2\gamma\big(\sqrt{(\gamma/\gopt)^2+1}-1\big),\quad
t_{2,\gamma}=2\max\big\{\gamma^{-1},\gopt^{-1}\big\}, \quad
t_{1,\gamma}\coloneqq t_{2,\gamma}\big(1-\sqrt{(1-(\gamma/\gopt)^2)_+}\,\big),
$$
$\varkappa=0.5315\ldots$ were $\gopt=0.5599\ldots$ defined above. The values of the upper bounds of the asymptotically exact constants $\aexess(\eps,\gamma)$ and $\aexroz(\eps,\gamma)$ in~\eqref{aexess(eps,gamma)<=} and~\eqref{aexroz(eps,gamma)<=} for some $\eps>0$ and $\gamma>0$ are given in the third and the sixth columns of table~\ref{Tab:Ess+RozAEX(eps,gamma)<=}, respectively. 

\begin{table}[h!]
\begin{center}
\begin{tabular}{||c|c|c||c|c|c||}
\hline
$\eps$&$\gamma$&$\vphantom{\displaystyle\frac12}\aexess(\eps,\gamma)\le$&
$\eps$&$\gamma$&$\aexroz(\eps,\gamma)\le$\\ \hline
$0.6$&$0.3$&$1.92245$&$1.21$&$0.2$&$1.93474$\\ \hline 
$1.21$&$0.2$&$1.95457$&$1.89$&$0.2$&$1.92998$\\ \hline 
$2.06$&$0.2$&$1.94999$&$2.77$&$0.2$&$1.92890$\\ \hline 
$\infty$&$0.2$&$1.94879$&$5.39$&$0.2$&$1.95832$\\ \hline 
$1.48$&$0.4$&$1.80997$&$1.41$&$0.4$&$1.77974$\\ \hline 
$\infty$&$0.4$&$1.80005$&$1.76$&$0.4$&$1.77249$\\ \hline 
$1.89$&$\gopt$&$1.77136$&$1.99$&$0.4$&$1.77128$\\ \hline 
$2.03$&$\gopt$&$1.76995$&$2.63$&$0.4$&$1.77841$\\ \hline 
$\infty$&$\gopt$&$1.76370$&$0.5$&$\gopt$&$1.94743$\\ \hline 
$1$&$\gopt$&$1.80596$&$1$&$\gopt$&$1.79154$\\ \hline 
$1$&$0.67$&$1.79961$&$1.52$&$\gopt$&$1.74995$\\ \hline 
$1$&$\infty$&$1.79149$&$1.89$&$\gopt$&$1.74383$\\ \hline 
$2.24$&$1$&$1.73996$&$1.99$&$\gopt$&$1.74412$\\ \hline 
$\infty$&$1$&$1.73186$&$2.12$&$\gopt$&$1.74542$\\ \hline 
$3.07$&$\infty$&$1.71998$&$3$&$\gopt$&$1.77092$\\ \hline 
$3.2$&$5$&$1.71997$&$5$&$\gopt$&$1.86500$\\ \hline 
$3.28$&$4$&$1.71999$&&&\\\hline
$4$&$2.4$&$1.71998$&&&\\\hline
$5$&$2.06$&$1.71997$&&&\\\hline
$5.37$&$2$&$1.72000$&&&\\\hline
$\infty$&$1.83$&$1.71995$&&&\\\hline
$\infty$&$\infty$&$1.71451$&&&\\\hline
\end{tabular}
\end{center}
\caption{Values of the upper bounds of the asymptotically exact constants $\aexess(\eps,\gamma)$ and $\aexroz(\eps,\gamma)$ in~\eqref{aexess(eps,gamma)<=} and~\eqref{aexroz(eps,gamma)<=} for some $\eps>0$ and $\gamma>0$. Recall that $\gopt=0.5599\ldots\ .$}
\label{Tab:Ess+RozAEX(eps,gamma)<=}
\end{table}

Moreover, in~\cite{GabdullinMakarenkoShevtsova2018}  it was shown that the asymptotically exact constants $A_{\bullet}^\textrm{\tiny AE}\in\{\aexess,\aexroz\}$ are unbounded as $\gamma\to0$:
$$
A_{\bullet}^\textrm{\tiny AE}(\eps,\gamma)\ge 
\sup_{F}\limsup_{n\to\infty} \frac{\Delta_n(F)}
{L_{\bullet,n}(\eps,\gamma)}\to\infty, \quad\gamma\to0,\quad \forall\eps>0.
$$

Let us note that estimate~\eqref{RozovskiiTypeIneq2018} with $\eps=\gamma=1$  coincides with the Rozovskii inequality~\cite[Corollary\,1]{Rozovskii1974} and establishes an upper bound for the appearing absolute constant $\rozC(1,1)\le\rozC(1,\gopt)\le2.73$. Estimate~\eqref{EsseenTypeIneq2018} with $\eps=\gamma=1$ and  $\eps\to\infty$, $\gamma=1$ improves both Esseen's inequalities from~\cite{Esseen1969}, where the absolute value sign and the least upper bound with respect to $z\in(0,\eps)$  stand inside the sum in comparison with~\eqref{EsseenTypeIneq2018}. In particular, estimate~\eqref{EsseenTypeIneq2018} yields upper bounds for the absolute constants in Esseen's inequalities  $\essC(1,1)\le\essC(1,0.72)\le2.73$ and $\essC(\infty,1)\le\essC(\infty,0.97)\le2.66$ which were remaining unknown for a long time. Esseen's inequality where the least upper bound is taken over a bounded range (see~\eqref{EsseenTypeIneq2018} with $\eps=\gamma=1$) yields, in its turn, the Osipov inequality~\cite{Osipov1966}:
\begin{multline}\label{OsipovIneq}
\Delta_n \le \essC(1,1)\sup_{0 < z < 1} \left\{\abs{\essM(z)} + zL_n(z) \right\}\le \essC(1,1)\sup_{0<z<1} \left\{\osiM(z)+zL_n(z)\right\}=
\\
=\essC(1,1)(\osiM(1)+L_n(1))= \essC(1,1)\inf_{\eps>0}(\osiM(\eps)+L_n(\eps))
\end{multline}
(for details see~\cite{GabdullinMakarenkoShevtsova2018}). The latest bound obtained by Osipov~\cite{Osipov1966} with \textit{some} constant $\essC(1,1)$ (whose best known value $1.87$ is published in~\cite{KorolevDorofeyeva2017}) yields, in its turn, the Lindeberg theorem: indeed, under the Lindeberg condition $\sup\limits_{\eps>0}\lim\limits_{n\to\infty}L_n(\eps)=0$ and with the account of  $\osiM(\eps)\le\eps$, from~\eqref{OsipovIneq} we have
$$
\Delta_n\le\essC(1,1)\inf_{\eps>0}(\eps+L_n(\eps))\to0,\quad n\to\infty.
$$
Hence, by Feller's theorem, in case of uniformly infinitesimal random summands (in particular, in the i.i.d. case) the right- and the left-hand sides of~\eqref{OsipovIneq} are either both infinitesimal or both do not tend to zero. According to the terminology, introduced by Zolotarev~\cite{Zolotarev1986}, such convergence rate estimates are called \textit{natural}. Together with~\eqref{EsseenTypeIneq2018}, inequality~\eqref{RozovskiiTypeIneq2018} is surely a natural convergence rate estimate in the Lindeberg--Feller theorem under the additional assumption of existence of such an $\eps_0>0$ that $\essM(\eps_0)=0$ for all sufficiently large $n$ (see, e.g.,~\cite{GabdullinMakarenkoShevtsova2020}), in particular, if the r.v.'s $X_1,\ldots,X_n$ have symmetric distributions.

Let us also note that Esseen-type inequality~\eqref{EsseenTypeIneq2018} not only links the \textit{criteria of convergence} with the \textit{rate of convergence}, as Osipov's inequality does, but also provides a \textit{numerical demonstration} of the Ibragimov's criteria~\cite{Ibragimov1966} of the rate of convergence in the CLT to be of order order $\mathcal O(n^{-1/2})$. According to~\cite{Ibragimov1966}, in the i.i.d. case we have $\Delta_n=\mathcal O(n^{-1/2})$ as $n\to\infty$ if and only if $\max\{|\mu_1(z)|,z\qt_1(z)\}=\mathcal O(1)$ as $z\to\infty.$ Inequality~\eqref{EsseenTypeIneq2018} trivially yields the sufficiency of the Ibragimov condition to $\Delta_n=\mathcal O(n^{-1/2})$ as $n\to\infty$.

Let us denote by $\G$ a set of all non-decreasing functions  $g\colon[0,\infty)\to[0,\infty)$ such that ${g(z)>0}$ for $z>0$ and $z / g(z)$ is also non-decreasing for $z>0$.  The set~$\G$ was initially introduced by Katz~\cite{Katz1963} and used later in the works~\cite{Petrov1965,KorolevPopov2012,KorolevDorofeyeva2017,GabdullinMakarenkoShevtsova2019,GabdullinMakarenkoShevtsova2019-2,GabdullinMakarenkoShevtsova2020}. In~\cite{GabdullinMakarenkoShevtsova2019} it was proved that
\begin{itemize}
\item[\rm(i)] For every function $g \in \G$ and $a > 0$
\begin{equation}\label{gmin_g_gmax_ineq}
g_0(z, a):=\min\left\{\frac{z}{a}, 1\right\}\ \le\ \frac{g(z)}{g(a)}\ \le\ \max\left\{\frac{z}{a}, 1\right\}:= g_1(z, a),\quad z>0,
\end{equation}
with $g_0(\,\cdot\,, a),$ $g_1(\,\cdot\,, a) \in \G.$
\item[\rm(ii)] Every function from $\G$ is continuous on $(0,\infty)$.
\end{itemize}
Property~\eqref{gmin_g_gmax_ineq} means that every function from $\G$ is asymptotically (as its argument goes to infinity) between a constant and a linear function. For example, besides $g_0$, $g_1,$ the class $\G$ also includes the following functions:
$$
g_{\textsc{c}}(z)\equiv 1,\quad g_*(z)=z,\quad c \cdot z^{\delta}, \quad c\cdot g(z),\qquad z>0,
$$
for all $c > 0$, $\delta \in [0,1]$ and $g\in\G$.

For $g\in\G$ we set 
\begin{multline} \label{L^3_def}
\es(g,\eps, \gamma) = \frac{1}{B_n^2 g(B_n)}  \sup_{0 < z < \eps B_n} \frac{g(z)}{z} \bigg\{ \gamma \bigg|\sum_{k = 1}^n \mu_k(z)\bigg| + z\sum_{k = 1}^n\sigma_k^2(z) \bigg\} 
\\
= \sup_{0 < z < \eps} \frac{g(zB_n)}{zg(B_n)} \left( \gamma\abs{\essM(z)} + zL_n(z) \right),
\end{multline}
\begin{multline} \label{wave_L^3_def}
\roz(g,\eps, \gamma) = \frac{1}{B_n^2 g(B_n)}\bigg(\gamma\,\frac{g(\eps B_n)}{\eps B_n}\bigg|\sum_{k=1}^n \mu_k(\eps B_n)\bigg|+\sup_{0<z < \eps B_n} g(z)\sum_{k=1}^n \sigma_k^2(z)\bigg) 
\\
= \gamma\,\frac{g(\eps B_n)}{\eps g(B_n)} \abs{\essM(\eps)} + \sup_{0 < z < \eps}\frac{g(zB_n)}{g(B_n)}\,L_n(z).
\end{multline}
Note that the introduced fractions with $g=g_*$ coincide with the fractions in the Esseen- and Rozovskii-type inequalities~\eqref{EsseenTypeIneq2018}, \eqref{RozovskiiTypeIneq2018} considered above:
$$
\es(g_*,\eps,\gamma) = \es(\eps,\gamma),\quad  \roz(g_*,\eps,\gamma) = \roz(\eps,\gamma), \quad \eps,\ \gamma>0.
$$
Inequalities~\eqref{EsseenTypeIneq2018}  and~\eqref{RozovskiiTypeIneq2018}  were generalized in~\cite{GabdullinMakarenkoShevtsova2020} in the following way:
\begin{eqnarray} \label{AW_ineq}
\Delta_n &\le& \CE(\eps, \gamma)\cdot\es(g,\eps, \gamma),
\\\label{Rozovsky_ineq}
\Delta_n &\le& \CR(\eps, \gamma)\cdot\roz(g,\eps, \gamma),\quad \eps,\,\gamma > 0,\quad g \in \G,
\end{eqnarray}
where
$$
\begin{array}{rcll}
\CE(\eps, \,\cdot\,)&=&\essC(\eps, \,\cdot\,),\quad\ \eps\in(0,1],&\text{ in particular, } \CE(+0,\,\cdot\,)= \infty,
\\
\CE(\eps, \,\cdot\,)&\le&\essC(1, \,\cdot\,),\quad\ \eps>1;
\\
\CR(\eps,\,\cdot\,)&=&\rozC(\eps,\,\cdot\,),\quad\  \eps\in(0,1],&\text{ in particular, } \CR(+0,\,\cdot\,)= \infty,
\\
\CR(\eps,\,\cdot\,)&\le&\eps\rozC(\eps,\,\cdot\,),\quad \eps>1,
&\CR(\infty,\,\cdot\,) = \infty
\end{array}
$$
(recall that, according to the above convention, all the equalities between the constants including  $\CE(0,\,\cdot\,)=\CR(0,\,\cdot\,)=\CR(\infty,\,\cdot\,) = \infty$ are exact with formal definitions of $\CE(\eps,\gamma)$ and $\CR(\eps,\gamma)$ being given in~\eqref{CE+CR(eps,gamma)defs} below).

It is easy to see that the both fractions $L_{\bullet\,,\,n}\in\{\es,\roz\}$ are invariant with respect to scale transformations of $g\in\G$:
$$
L_{\bullet\,,\,n}(cg,\,\cdot\,,\,\cdot\,) = L_{\bullet\,,\,n}(g,\,\cdot\,,\,\cdot\,),
\quad c>0.
$$
Moreover, in~\cite[Theorem\,2]{GabdullinMakarenkoShevtsova2020} it was proved that for all $\eps,\gamma>0$
\begin{equation}\label{g_1_Esseen}
1 \le \es(g_1,\eps, \gamma) \le \max\{\eps, 1\}\cdot \max\{\gamma, 1\} ,
\end{equation}
\begin{equation}\label{g_1_Rozovsky}
1 \le \roz(g_1,\eps, \gamma) \le \max\{\eps, 1\} \cdot (\gamma + 1).
\end{equation}
Extreme properties of the functions 
$$
g_0(z):= B_ng_0(z,B_n)=\min\{z, B_n\}, \quad g_1(z):= B_ng_1(z,B_n)=\max\{z, B_n\},\quad z>0,
$$
in~\eqref{gmin_g_gmax_ineq} with $a:=B_n$ yield
\begin{equation}\label{gmin_gmax_ineq}
L_{\bullet\,,\,n}(g_0,\,\cdot\,,\,\cdot\,) \le L_{\bullet\,,\,n}(g,\,\cdot\,,\,\cdot\,) \le L_{\bullet\,,\,n}(g_1,\,\cdot\,,\,\cdot\,),\quad g\in\G,
\end{equation}
for every fixed set of distributions of $X_1,\ldots,X_n$,
so that the extreme values of the constants $\CE$ and $\CR$ in~\eqref{AW_ineq} and~\eqref{Rozovsky_ineq} with fixed~$n$ and $F_1,\ldots,F_n$  are attained at $g=g_0$. Moreover, with the extreme functions~$g$ the fraction $L_{\bullet\,,\,n}\in\{\es,\roz\}$ satisfy the following relations for $\eps\le1$:
\begin{equation}\label{identical_g}
L_{\bullet\,,\,n}(g_0,\eps,\,\cdot\,) = L_{\bullet\,,\,n}(g_*,\eps,\,\cdot\,) = L_{\bullet\,,\,n}(\eps,\,\cdot\,), 
\end{equation}
\begin{equation}\label{const_g}
L_{\bullet\,,\,n}(g_1,\eps,\,\cdot\,) = L_{\bullet\,,\,n}(g_{\textsc{c}},\eps,\,\cdot\,).
\end{equation}

Inequality~\eqref{AW_ineq} also generalizes and improves up to the values of the appearing absolute constant the classical Katz--Petrov inequality~\cite{Katz1963,Petrov1965} (which is equivalent to the Osipov inequality~\cite{Osipov1966}) because of involving the algebraic truncated third order moments instead of the absolutes ones and also a recent result of  Wang and Ahmad~\cite{WangAhmad2016} at the expense of moving the modulus and the least upper bound signs outside of the sum sign. In particular, inequality~\eqref{AW_ineq} established an upper bound of the constant in the Wang--Ahmad inequality $\CE(\infty, 1)\le\essC(1,1)\le2.73$.

A detailed survey and analysis of the relationships between inequalities~\eqref{EsseenTypeIneq2018}, \eqref{RozovskiiTypeIneq2018}, \eqref{AW_ineq}, \eqref{Rozovsky_ineq} with inequalities of Katz~\cite{Katz1963}, Petrov~\cite{Petrov1965}, Osipov~\cite{Osipov1966}, Esseen~\cite{Esseen1969}, Rozovskii~\cite{Rozovskii1974}, and Wang--Ahmad~\cite{WangAhmad2016} can be found in papers~\cite{GabdullinMakarenkoShevtsova2018,GabdullinMakarenkoShevtsova2020}.

The main goal of the present work is construction of the lower bounds of the absolute constants $\CE(\eps,\gamma)$, $\CR(\eps,\gamma)$ in inequalities~\eqref{AW_ineq}, \eqref{Rozovsky_ineq}, and also of the constants $\essC(\eps,\gamma)$, $\rozC(\eps,\gamma)$ in inequalities~\eqref{EsseenTypeIneq2018}, \eqref{RozovskiiTypeIneq2018}, in particular, we show that even in the i.i.d. case
$$
\CE(1,1)=\essC(1,1)> 0.5685,\quad \CR(1,1)=\rozC(1,1)> 0.5685,\quad 
$$
$$
\CE(\infty,1)>0.5685,\quad \essC(\infty,1)>0.3703.
$$
We consider various statements of the problem of construction of the lower bounds, namely, we  introduce a detailed classification of the asymptotically exact constants and construct their lower bounds. As a corollary, we obtain two-sided bounds for the asymptotically exact constants $\aexess(\eps,\gamma)$ and $\aexroz(\eps,\gamma)$ defined in~\eqref{aexCessDef} and~\eqref{aexCrozDef}, in particular, we show that
$$
0.4097\ldots=\frac{\sqrt{10}+3}{6\sqrt{2\pi}}\le\aexess(1,1)\le1.80.
$$
$$
0.3989\ldots=\frac{1}{\sqrt{2\pi}}\le\aexroz(1,1)\le1.80.
$$

The paper is organized  as follows. In Section\,2 we introduce exact, asymptotically exact and asymptotically best constants defining the corresponding statements for the construction of the lower bounds. Sections 4, 5, and 6 are devoted namely to the construction of the lower bounds for the introduced constants. Section 3 contains some auxiliary results which might represent an independent interest, in particular, the values of the fractions $\es(g,\eps,\gamma)$ and $\es(g,\eps,\gamma)$ are found for all $n$, $\eps,\gamma>0$ and some $g\in\G$ in the case where $X_1,\ldots,X_n$ have identical two-point distribution.

\section{Exact, asymptotically exact and asymptotically best constants}

Following~\cite{Kolmogorov1953,Esseen1956,Zolotarev1986,Chistyakov1996,Chistyakov2001a,Chistyakov2001b,Chistyakov2002,Shevtsova2010DAN,Shevtsova2010TVP,Shevtsova2014DAN}, let us define exact, asymptotically exact and asymptotically best constants in inequalities~\eqref{AW_ineq}, \eqref{Rozovsky_ineq}.  Let $\mathcal{F}$ be a set of all d.f.'s with zero means and finite second order moments. Denote
\begin{equation}\label{CE+CR(g,eps,gamma)defs}
\CE(g, \eps, \gamma) = 
\sup_{\substack{F_1,\ldots,F_n\in\mathcal{F}, \\n\in\N\colon B_n>0}} \frac{\Delta_n(F_1,\ldots,F_n)}{\es(g,\eps,\gamma)},\quad  \CR(g,\eps,\gamma) = \sup_{\substack{F_1,\ldots,F_n\in\mathcal{F}, \\n\in\N\colon B_n>0}} \frac{\Delta_n(F_1,\ldots,F_n)}{\roz(g,\eps,\gamma)}.
\end{equation}
Note that the fractions $\es(g,\eps,\gamma)$, $\roz(g,\eps,\gamma)$ also depend on d.f.'s $F_1,\ldots,F_n$, but we omit these arguments for the sake of brevity. 

The constants $\CE(g, \eps, \gamma)$ and $\CR(g, \eps, \gamma)$ are the minimal possible (\textit{exact}) values of the constants $\CE(\eps,\gamma)$ and $\CR(\eps,\gamma)$ in inequalities~\eqref{AW_ineq}, \eqref{Rozovsky_ineq} for the fixed function $g\in\G$, while their universal values $\sup_{g \in \G} \CE(g, \eps, \gamma)$, $\sup_{g \in \G} \CR(g, \eps, \gamma),$ that provide the validity of the inequalities under consideration for \textit{all} $g\in\G$ are called \textit{exact} constants and namely they are the minimal possible (exact) values of the constants $\CE(\eps,\gamma)$ and $\CR(\eps,\gamma)$ in~\eqref{AW_ineq} and~\eqref{Rozovsky_ineq}, respectively. In order not to introduce excess notation and following the above convention, we use namely these exact values for the definitions of $\CE(\eps,\gamma)$ and $\CR(\eps,\gamma)$ in the present work:
\begin{equation}\label{CE+CR(eps,gamma)defs}
\CE(\eps, \gamma) = \sup_{g \in \G} \CE(g, \eps, \gamma), \quad \CR(\eps, \gamma) = \sup_{g \in \G} \CR(g, \eps, \gamma),
\end{equation}
and call them the \textit{exact} constants. Note that
$$
\essC(\eps,\gamma)=\CE(g_*,\eps,\gamma)\le\CE(\eps,\gamma),\quad \rozC(\eps,\gamma)=\CR(g_*,\eps,\gamma)\le\CR(\eps,\gamma),\quad \eps,\gamma>0,
$$
hence, every lower bound for the constants $\essC(\eps,\gamma)$, $\rozC(\eps,\gamma)$ serves as a lower bound for the constants $\CE(\eps,\gamma)$, $\CR(\eps,\gamma)$ as well.

The least upper bounds in~\eqref{CE+CR(g,eps,gamma)defs} are taken without any restrictions on the values of the fractions $\es(g,\eps, \gamma)$, $\roz(g,\eps, \gamma)$, while inequalities~\eqref{AW_ineq}, \eqref{Rozovsky_ineq} represent the most interest with \textit{small} values of these fractions, when the normal approximation is adequate and only the concrete estimates of its accuracy are needed. That is why it is interesting to study not only the \textit{absolute}, but also the \textit{asymptotic} constants  in~\eqref{AW_ineq}, \eqref{Rozovsky_ineq}, some of which were already introduced in~\eqref{aexCessDef}, \eqref{aexCrozDef}. Since we are interested in the lower bounds, we assume that $X_1,\ldots, X_n$ have  identical distributions. Generalization of the definitions introduced below to the non-i.id. case is not difficult  (see, for example, definitions~\eqref{aexCessDef} and~\eqref{aexCrozDef} of the asymptotically exact constants for the general case). 

For each of the fractions $L_n\in\{\es,\roz\}$ appearing in inequalities~\eqref{AW_ineq}, \eqref{Rozovsky_ineq} we define
the \textit{asymptotically best constants}
\begin{equation}\label{ABEdef}
\abe(g, \eps, \gamma) = \sup_{F\in\mathcal{F}} \limsup_{n \rightarrow \infty} \Delta_n(F)/L_n(g,\eps,\gamma),
\end{equation}
the \textit{upper asymptotically exact constants}
\begin{equation} \label{UpAEXdef}
\upaex(g, \eps, \gamma) = \limsup_{n \rightarrow \infty} \sup_{F\in\mathcal{F}} \Delta_n(F)/L_n(g,\eps,\gamma),
\end{equation}
the \textit{asymptotically exact constants}
\begin{equation}\label{AEXdef}
\aex(g, \eps, \gamma) = \limsup_{\ell \rightarrow 0} \sup_{n\in\N,\, F\in\mathcal{F}\colon L_n(g,\eps,\gamma) = \ell} \Delta_n(F) / \ell,
\end{equation}
the \textit{lower asymptotically exact constants}
\begin{equation}\label{LowAEXdef}
\lowaex(g, \eps, \gamma) = \limsup_{\ell \rightarrow 0} \limsup_{n \rightarrow \infty} \sup_{F\in\mathcal{F}\colon L_n(g,\eps,\gamma)= \ell} \Delta_n(F) / \ell,
\end{equation}
the \textit{conditional upper asymptotically exact constants}
\begin{equation}\label{CondUpAEXdef}
\upaexcond(g, \eps, \gamma) = \sup_{\ell > 0} \limsup_{n \rightarrow \infty} \sup_{F\in\mathcal{F}\colon L_n(g,\eps,\gamma)= \ell} \Delta_n(F) / \ell.
\end{equation}
In order not to introduce excess indexes, we use identical notation for the asymptotic constants in~\eqref{AW_ineq}, \eqref{Rozovsky_ineq}, in what follows every time specifying the inequality in question. Recall that the fractions $L_n$ appearing in~\eqref{ABEdef}--\eqref{CondUpAEXdef}  depend also on the common d.f. $F$ of the random summands $X_1,\ldots,X_n$. All the introduced constants, except~\eqref{ABEdef}, assume double array scheme. The values of $L_n$ are infinitesimal in~\eqref{ABEdef}, \eqref{AEXdef}, and~\eqref{LowAEXdef}. The distinction between~\eqref{AEXdef} and~\eqref{LowAEXdef} is in the upper bound and the limit with respect to $n$, so that $\aex(g, \eps, \gamma) \ge\lowaex(g, \eps, \gamma) $, where the strict inequality may also take place, as it happens indeed, for example,  with the similar constants in the classical Berry--Esseen inequality~\cite{Chistyakov1996,Chistyakov2001a,Chistyakov2001b,Chistyakov2002,Shevtsova2010DAN,Shevtsova2010TVP}. The constants in the classical Berry--Esseen inequality similar to those defined in~\eqref{ABEdef}--\eqref{CondUpAEXdef} were firstly considered in~\cite{Esseen1956}  for~\eqref{ABEdef}, \cite{Kolmogorov1953} for~\eqref{AEXdef}, \cite{IbragimovLinnik1965,Zolotarev1986} for~\eqref{UpAEXdef},
\cite{Shevtsova2010DAN} for~\eqref{LowAEXdef}, and~\cite{Shevtsova2014DAN} for~\eqref{CondUpAEXdef}. The upper asymptotically exact~\eqref{UpAEXdef} and the conditional upper asymptotically exact~\eqref{CondUpAEXdef} constants are linked by the following relation $\upaexcond\le \upaex$ by definition, and we shall construct lower bounds namely for $\upaexcond$. As for the constants $\upaex$, we introduce them here to pay tribute to the classical works~\cite{IbragimovLinnik1965,Zolotarev1986}. The function~$g$ in~\eqref{ABEdef}, \eqref{UpAEXdef}, \eqref{CondUpAEXdef} may be arbitrary from the class~$\G$, while the constants~$\abe(g, \eps, \gamma)$ and $\lowaex(g, \eps, \gamma)$ (see~\eqref{AEXdef} and \eqref{LowAEXdef}) are defined not for all $g\in\G$. For example, $\abe(g_1, \eps, \gamma)$ and $\lowaex(g_1, \eps, \gamma)$ are not defined in any of the inequalities~\eqref{AW_ineq}, \eqref{Rozovsky_ineq}, since the corresponding fractions $\es(g_1,\eps,\gamma)$, $\roz(g_1,\eps,\gamma)$ are bounded from below by one uniformly with respect to~$\eps$ and~$\gamma$ (see~\eqref{g_1_Esseen} and~\eqref{g_1_Rozovsky}) and, hence, cannot be infinitesimal.


Finally, note that definitions~\eqref{ABEdef}--\eqref{CondUpAEXdef} immediately yield the relations
\begin{equation}\label{asypt_const_in}
\max\{\lowaex,\abe\}\le \min\{\aex , \upaexcond\},\quad \max\{\aex,\upaexcond\} \le \upaex
\end{equation}
where we omitted the arguments $g, \eps, \gamma$ for clarity).

\section{Two-point distributions}

Most of the lower bounds will be obtained by the choice of a two-point distribution 
\begin{equation}\label{two_point_dist}
\Prob\left (X_k = \sqrt{\frac{q}{p}}\right ) = 1-\Prob\left (X_k = -\sqrt{\frac{p}{q}}\right) = p, \quad q=1-p\in (0,1),\quad k=1,\ldots,n,
\end{equation}
for the random summands. The present section contains the corresponding required results. In particular, we will find values of the fractions $\es(g,\eps,\gamma)$, $\roz(g,\eps,\gamma)$ with $g=g_*,\,g_{\textsc{c}},\,g_0,\,g_1$ for all $\eps,\gamma>0$ (theorem~\ref{ThFractionsFor2PointDistr}). We will also investigate the uniform distance between the d.f. of~\eqref{two_point_dist} and the standard normal d.f. $\Phi$ and find the corresponding extreme values of the argument of d.f.'s (see theorem~\ref{ThUniDist(2point,Phi)n=1}):
\begin{equation}
\Delta_1(p) := \sup_{x \in \R}\abs{\Prob(X_1 < x) - \Phi(x)} =
\begin{cases}
\Phi(\sqrt{p/q}) - p, \quad 0 < p < \frac{1}{2}, 
\\
\Phi(\sqrt{q/p}) - q, \quad \frac{1}{2} \le p < 1.
\end{cases}
\end{equation}

\subsection{Computation of the fractions}
 
For distribution~\eqref{two_point_dist} we have $\E X_k = 0$, $\E X_k^2 = 1$, $k=1,\ldots,n$, $B_n^2=n$,
$$
g_0(z) = \min\{z, \sqrt{n}\}, \quad g_1(z) = \max\{z, \sqrt{n}\},\quad g_*(z)=z,\quad g_{\textsc{c}}(z)=1,\quad z\ge0.
$$  
Recall that for $\eps\le1$
$$
\es(g_0,\eps,\cdot)=\es(g_*,\eps,\cdot)=\es(\eps,\cdot),\quad \es(g_1,\eps,\cdot)=\es(g_{\textsc{c}},\eps,\cdot),
$$
$$
\roz(g_0,\eps,\cdot)=\roz(g_*,\eps,\cdot)=\roz(\eps,\cdot),\quad \roz(g_1,\eps,\cdot)=\roz(g_{\textsc{c}},\eps,\cdot).
$$
That is why in the formulation of the next theorem~\ref{ThFractionsFor2PointDistr}, we do not indicate values of  $\es(g_0,\eps,\cdot)$, $\es(g_1,\eps,\cdot)$, $\roz(g_0,\eps,\cdot)$, $\roz(g_1,\eps,\cdot)$ for $\eps\le1$ separately.

\begin{Theorem}\label{ThFractionsFor2PointDistr}
{\rm (i)} For all $n\in\N$, $p\in[1/2,1)$ and $\eps,\gamma>0$  we have
\begin{equation} \label{Esseen_g0_two_point}
\es(g_*,\eps,\gamma)
=\begin{cases}
\eps, \quad n\eps^2\le\frac{q}{p}, 
\\
\max\left\{ \sqrt{\frac{q}{np}}, \gamma \sqrt{\frac{q^3}{np}} + \eps p \right\}, \quad {\frac{q}{p}}<n\eps^2\le\frac{p}{q}, 
\\
\max\left\{q, (\gamma q^2 + p^2)\I(p>1/2),\gamma(p-q)\right\}/\sqrt{npq}, \quad n\eps^2>\frac{p}{q},
\end{cases}
\end{equation}
$$
\es(g_0,\eps,\gamma)=\es(g_*,\eps\wedge1,\gamma),
$$
\begin{equation}\label{Esseen_g1_two_point}
\es(g_{\textsc{c}},\eps,\gamma)
= \begin{cases}
1, & n\eps^2\le{\frac{q}{p}},
\\
\max\left\{ 1, \gamma q + p \right\}, & \frac{q}{p}<n\eps^2\le\frac{p}{q},
\\
\max\left\{1,  (\gamma q + p)\I(p>\frac12), \frac{\gamma(p - q)}{p}  \right\}, & n\eps^2>\frac{p}{q},
\end{cases}
\end{equation}
\begin{equation}\label{Rozovsky_g0_two_point}
\roz(g_*,\eps,\gamma)
=\begin{cases}
\eps, & n\eps^2\le\frac{q}{p}, 
\\
\gamma\sqrt{\frac{q^3}{np}}+\max\left\{ \sqrt{\frac{q}{np}}, \eps p \right\}, & \frac{q}{p}<n\eps^2\le\frac{p}{q}, 
\\
\frac{\gamma(p-q)}{\sqrt{npq}}+ \max\left\{ \sqrt{\frac{q}{np}}, \sqrt{\frac{p^3}{nq}} \right\}, & n\eps^2>\frac{p}{q},
\end{cases}
\end{equation}
\begin{equation}\label{Rozovsky_g1_two_point}
\roz(g_{\textsc{c}},\eps,\gamma) =
\begin{cases}
1, & n\eps^2\le\frac{q}{p}, 
\\
\frac{\gamma}{\eps}\sqrt{\frac{q^3}{np}}+1, & \frac{q}{p}<n\eps^2\le\frac{p}{q}, 
\\
\frac{\gamma(p-q)}{\eps\sqrt{npq}}+1, & n\eps^2>\frac{p}{q},
\end{cases}
\end{equation}

{\rm (ii)} For all $\eps>1,\ \gamma>0,\ n\in\N$ 
$$
\es(g_1,\eps,\gamma) =1,\quad\text{if } p=1/2, 
$$
$$
\es(g_1,\eps,\gamma) = 
\begin{cases}
\max\big\{1, \gamma q + p, \frac{\gamma q^{3/2}}{\sqrt{np}}+p\eps \big\}, & n\eps^2\le \frac pq,
\\
\max\big\{1, \gamma q + p, \frac{\gamma q^2+p^2}{\sqrt{npq}}, \frac{\gamma(p - q)}{\sqrt{npq}}\big\}, & n\le \frac pq<n\eps^2,
\\
\max\big\{1, \gamma q + p, \frac{\gamma(p-q)}{p}\big\}, & n> \frac pq,
\end{cases}
\quad\text{if } p>1/2.
$$

{\rm (iii)} For all $\eps>1$, $\gamma>0$, $n\in\N$, and $p\in[1/2,1)$
$$
\roz(g_0,\eps,\gamma)
=\begin{cases}
\frac{\gamma}{\eps}\sqrt{\frac{q^3}{np}}+1
,& n\le\frac{q}{p}<n\eps^2\le\frac{p}{q}, 
\\
\frac{\gamma(p-q)}{\eps\sqrt{npq}}+1
,& n\le\frac{q}{p}\le\frac{p}{q}<n\eps^2, 
\\
\frac{\gamma}{\eps}\sqrt{\frac{q^3}{np}}
+\max\left\{ \sqrt{\frac{q}{np}}, p \right\}
,& \frac{q}{p}<n<n\eps^2\le\frac{p}{q}, 
\\
\frac{\gamma(p-q)}{\eps\sqrt{npq}}
+\max\left\{ \sqrt{\frac{q}{np}}, p \right\}
,& \frac{q}{p}<n\le\frac{p}{q}<n\eps^2, 
\\
\frac{\gamma(p-q)}{\eps\sqrt{npq}}
+\max\left\{ \sqrt{\frac{q}{np}}, \sqrt{\frac{p^3}{nq}}\right\}
,& n>\frac{p}{q},
\end{cases}
$$
$$
\roz(g_1,\eps,\gamma) =
\begin{cases}
\gamma\sqrt{\frac{q^3}{np}}+\max\left\{ \sqrt{\frac{q}{np}}, \eps p\right\}, 
& n\le\frac{q}{p}<n\eps^2\le\frac{p}{q}, 
\\
\frac{\gamma(p-q)}{\sqrt{npq}}+\max\left\{ \sqrt{\frac{q}{np}}, \sqrt{\frac{p^3}{nq}}\right\}, 
& n\le\frac{q}{p}\le\frac{p}{q}<n\eps^2, 
\\
\gamma\sqrt{\frac{q^3}{np}}+\max\left\{ \eps p,1\right\},
& \frac{q}{p}<n<n\eps^2\le\frac{p}{q}, 
\\
\frac{\gamma(p-q)}{\sqrt{npq}}+\max\left\{ \sqrt{\frac{p^3}{nq}},1\right\},
& \frac{q}{p}<n\le\frac{p}{q}<n\eps^2, 
\\
\frac{\gamma(p-q)}{\sqrt{npq}}+1,
& \frac{p}{q}<n.
\end{cases}
$$
In particular,
$$
\roz(g_{\textsc{c}},\infty,\,\cdot\,)\equiv1,\quad p\in\big[\tfrac12,1\big),\ n\in\N,
$$
for $p=1/2$ and all $\eps,\gamma>0,\ n\in\N:$
$$
\es(g_*,\eps,\gamma)=\es(g_0,\eps,\gamma) =\roz(g_*,\eps,\gamma)=\roz(g_0,\eps,\gamma) =\min\big\{\eps,\tfrac1{\sqrt n}\big\},
$$
$$
\es(g_{\textsc{c}},\eps,\gamma)=\es(g_1,\eps,\gamma)=
\roz(g_{\textsc{c}},\eps,\gamma)=\roz(g_1,\eps,\gamma)\equiv1,
$$
for $p>1/2$ and all $n\in\N:$
$$
\es(g_*,1,1)=\es(g_0,1,1)=\es(g_0,\infty,1) = 
\begin{cases}
\frac{q^2}{\sqrt{npq}} + p, & n\le\frac{p}{q}, 
\\
\frac{q^2+p^2}{\sqrt{npq}}, & n>\frac{p}{q},
\end{cases}
$$
$$
\es(g_*,\infty,1)=
\frac{q^2+p^2}{\sqrt{npq}},
\qquad\qquad\qquad\qquad \qquad\qquad \quad\
$$
$$
\es(g_0,\infty,1) = 
\begin{cases}
\max\left\{ \sqrt{\frac{q}{np}},\sqrt{\frac{q^3}{np}} + p, \frac{p-q}p\right\}, & n\le \frac pq,
\\
\frac{q^2 + p^2}{\sqrt{npq}}, & n> \frac pq,
\end{cases}
$$
$$
\es(g_{\textsc{c}},\eps,\gamma)=1,\quad \eps>0,\ \gamma\le1. \qquad\qquad\qquad\qquad 
$$
\end{Theorem}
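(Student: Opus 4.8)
The plan is to compute everything explicitly from the two atoms of the distribution and then reduce each fraction to the supremum of a simple piecewise function. For~\eqref{two_point_dist} with $p\ge1/2$ the variable $|X_k|$ takes only the two values $\sqrt{q/p}\le1$ (when $X_k=\sqrt{q/p}$, with probability $p$) and $\sqrt{p/q}\ge1$ (when $X_k=-\sqrt{p/q}$, with probability $q$), and $B_n=\sqrt n$. First I would compute the truncated quantities as functions of the truncation level $z$, whose only breakpoints are $z_1=\sqrt{q/(np)}$ and $z_2=\sqrt{p/(nq)}$ (that is, $nz^2=q/p$ and $nz^2=p/q$). A direct evaluation gives three regimes: for $nz^2\le q/p$ one has $\essM(z)=0$ and $L_n(z)=1$; for $q/p<nz^2\le p/q$ only the inner atom is kept, so $\essM(z)=q^{3/2}/\sqrt{np}$, while only the outer atom exceeds the threshold, so $L_n(z)=p$; and for $nz^2>p/q$ both atoms are kept, so $L_n(z)=0$ and, using $\E X_k^3=(q^2-p^2)/\sqrt{pq}=-(p-q)/\sqrt{pq}$, one gets $|\essM(z)|=(p-q)/\sqrt{npq}$.

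Next I would read off the weight factors from the definitions~\eqref{L^3_def}, \eqref{wave_L^3_def}, using $g(B_n)=B_n$ for $g\in\{g_*,g_0,g_1\}$ and $g_{\textsc{c}}(B_n)=1$. For the Esseen-type fraction the single weight $g(zB_n)/(zg(B_n))$ equals $1,\,1/z,\,\min\{1,1/z\},\,\max\{1,1/z\}$ for $g_*,\,g_{\textsc{c}},\,g_0,\,g_1$, respectively. For the Rozovskii-type fraction there are two weights: the factor $g(\eps B_n)/(\eps g(B_n))$ multiplying $\gamma|\essM(\eps)|$, and the factor $g(zB_n)/g(B_n)$ inside the supremum of $L_n(z)$; these equal $(1,z)$, $(1/\eps,1)$, $(\min\{1,1/\eps\},\min\{z,1\})$, $(\max\{1,1/\eps\},\max\{z,1\})$ for the four choices of $g$. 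This turns each of $\es(g,\eps,\gamma)$, $\roz(g,\eps,\gamma)$ into the supremum over $0<z<\eps$ of an explicit piecewise function; for instance $\es(g_*,\eps,\gamma)=\sup_{0<z<\eps}\bigl(\gamma|\essM(z)|+zL_n(z)\bigr)$, whose integrand equals $z$, $\gamma\sqrt{q^3/(np)}+zp$, and $\gamma(p-q)/\sqrt{npq}$ on the three regimes.

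The supremum is then evaluated by monotonicity: on the first two regimes the integrand is increasing in $z$ (linear, positive slope), on the third it is constant, so the only candidate maximisers are the breakpoints $z_1,z_2$ and the right endpoint $\eps$. Taking the maximum of the one-sided limits at these points and simplifying (e.g.\ $\sqrt{q/(np)}=q/\sqrt{npq}$ and $\gamma q^{3/2}/\sqrt{np}+p^{3/2}/\sqrt{nq}=(\gamma q^2+p^2)/\sqrt{npq}$) produces the closed forms~\eqref{Esseen_g0_two_point}--\eqref{Rozovsky_g1_two_point}. For $\roz$ the only structural change is that $\gamma|\essM(\cdot)|$ is frozen at $z=\eps$ rather than being part of the supremum, and that $\sup_{0<z<\eps}L_n(z)=1$ always (attained as $z\to0+$, where $nz^2\le q/p$); this immediately gives $\roz(g_{\textsc{c}},\eps,\gamma)=\tfrac\gamma\eps|\essM(\eps)|+1$ and hence $\roz(g_{\textsc{c}},\infty,\cdot)\equiv1$. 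The reduction $\es(g_0,\eps,\gamma)=\es(g_*,\eps\wedge1,\gamma)$ and the coincidences recalled before the theorem follow because the $g_0$-weight $\min\{1,1/z\}$ equals $1$ on $z\le1$, while for $z>1$ the weighted integrand is decreasing and its value at $z=1{+}$ never exceeds the supremum already collected on $(0,1)$; the analogous splitting of the $g_1$-weight $\max\{1,1/z\}$ glues the $g_{\textsc{c}}$-expression on $(0,1)$ to the $g_*$-expression on $(1,\eps)$.

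The main obstacle is purely combinatorial bookkeeping in the cases $\eps>1$ of parts (ii) and (iii): there the position of each breakpoint $z_1,z_2$ relative both to $1$ and to $\eps$ matters (one always has $z_1\le1$, but $z_2\le1$ only when $n\ge p/q$), and this is exactly what produces the five-way splits in the formulas for $\es(g_1,\eps,\gamma)$, $\roz(g_0,\eps,\gamma)$ and $\roz(g_1,\eps,\gamma)$. A secondary point requiring care is the degenerate case $p=1/2$, where $\sqrt{q/p}=\sqrt{p/q}=1$, so the middle regime collapses to a single point and the term $\gamma q^2+p^2$ disappears (whence the indicator $\I(p>1/2)$); one must also keep in mind that the one-sided limits at a breakpoint may jump up or down according to whether $\gamma\gtrless1$, so it is genuinely the maximum of the two-sided values, and not a single continuous value, that determines the supremum.
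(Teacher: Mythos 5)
Your proposal follows essentially the same route as the paper: compute $\mu_1(\cdot)$ and $\sigma_1^2(\cdot)$ explicitly with breakpoints at $\sqrt{q/p}$ and $\sqrt{p/q}$, read off the weight factors for $g_*,g_{\textsc{c}},g_0,g_1$, and evaluate each supremum piecewise by monotonicity, splitting at $B_n=\sqrt n$ for $g_0$ and $g_1$; the case bookkeeping you defer is exactly what the paper carries out. The only point to watch is that for the reduction $\es(g_0,\eps,\gamma)=\es(g_*,\eps\wedge 1,\gamma)$ the weighted integrand on $z>1$ is decreasing only within each regime and can jump up at $z_2=\sqrt{p/(nq)}$, so one must also compare the post-jump value $\gamma(p-q)/p$ with the supremum on $(0,1)$ (the paper does this via $\gamma(p-q)/p\le\gamma(p-q)/\sqrt{npq}$ for $n\le p/q$), which your general ``maximum over one-sided limits at all breakpoints'' prescription would catch anyway.
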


\begin{proof} 
Observe that in the i.i.d. case under consideration we have $B_n^2=n$,
$$
\es(g,\eps, \gamma) = \frac{1}{g(\sqrt{n})}  \sup_{0 < z < \eps\sqrt{n}} \frac{g(z)}{z} \left \{ \gamma \abs{\mu_1(z)} + z\sigma_1^2(z) \right \},
$$
$$
\roz(g,\eps, \gamma) =\frac{1}{g(\sqrt{n})}\bigg(\frac{g(\eps\sqrt{n})}{\eps\sqrt{n}}\, \gamma\abs{\mu_1(\eps\sqrt{n})}+\sup_{0<z < \eps\sqrt{n}} g(z)\sigma_1^2(z)\bigg).
$$
Let us find $\mu_1(\,\cdot\,)$, $\sigma_1^2(\,\cdot\,)$. For all $z>0$ we have
$$
\abs{\mu_1(z)} = \abs{\E X_1^3\I(|X_1| < z)}= 
\begin{cases}
0,& z \le \sqrt{\frac{q}{p}},
\\
\sqrt{\frac{q^3}{p}}, &\sqrt{\frac{q}{p}} < z \le \sqrt{\frac{p}{q}},
\\
\frac{p - q}{\sqrt{pq}},& z > \sqrt{\frac{p}{q}},
\end{cases}
$$
$$
\sigma_1^2(z) = \E X_1^2\I(|X_1|\ge z)= 
\begin{cases}
1,& z\le\sqrt{\frac{q}{p}}, \\
p, &\sqrt{\frac{q}{p}}<z\le\sqrt{\frac{p}{q}}, \\
0, &z > \sqrt{\frac{p}{q}},
\end{cases}
$$
$$
\gamma \abs{\mu_1(z)} + z\sigma_1^2(z) = 
\begin{cases}
z, &z \le \sqrt{\frac{q}{p}}, \\
\gamma \sqrt{\frac{q^3}{p}} + zp, &\sqrt{\frac{q}{p}} < z \le \sqrt{\frac{p}{q}}, \\
\frac{\gamma(p - q)}{\sqrt{pq}}, &z > \sqrt{\frac{p}{q}}.
\end{cases}
$$

1) Compute $\es(g_*,\eps,\gamma)$ and $\es(g_0,\eps,\gamma)$. For all $n\in\N$, $\eps,\gamma>0$ we have
$$
\es(g_*,\eps,\gamma)
=\frac1{\sqrt{n}}\sup_{0<z<\eps\sqrt{n}}\left\{ \gamma\abs{\mu_1(z)}+z\sigma_1^2(z)\right \}=
$$
$$
=\begin{cases}
\eps, & n\eps^2\le\frac{q}{p}, 
\\
\max\left\{ \sqrt{\frac{q}{np}}, \gamma \sqrt{\frac{q^3}{np}} + p\eps \right\}, & {\frac{q}{p}}<n\eps^2\le\frac{p}{q}, 
\\
\max\left\{ \sqrt{\frac{q}{np}}, \Big(\gamma \sqrt{\frac{q^3}{np}} + \sqrt{\frac{p^3}{nq}}\Big)\I(p>\frac12), \frac{\gamma(p-q)}{\sqrt{npq}}\right\}, & n\eps^2>\frac{p}{q},
\end{cases}
$$
in particular, $\es(g_*,\eps,\gamma)=\min\big\{\eps,\frac1{\sqrt n}\big\}$ for $p=\frac12$. Now let us find $\es(g_0,\eps,\gamma) $ for $\eps>1$. We have
\begin{multline*}
\es(g_0,\eps,\gamma) =\sup_{0 < z < \eps\sqrt{n}} \frac{z\wedge\sqrt{n}}{z\sqrt{n}} \left \{ \gamma \abs{\mu_1(z)} + z\sigma_1^2(z) \right \}=
\\
=\max\bigg\{
\frac1{\sqrt{n}}\sup_{0 < z < \sqrt{n}}\left \{ \gamma \abs{\mu_1(z)} + z\sigma_1^2(z) \right \},
\sup_{\sqrt{n}\le  z <\eps\sqrt{n}}  \frac{1}{z} \left \{ \gamma \abs{\mu_1(z)} + z\sigma_1^2(z) \right \}\bigg \}=
\\
=\max\bigg \{ \es(g_*,1,\gamma),
\sup_{\sqrt{n}\le  z <\eps\sqrt{n}} \left \{ \frac{\gamma}z \abs{\mu_1(z)} + \sigma_1^2(z) \right \}\bigg \},\quad \gamma>0.
\end{multline*}
If $p=q=1/2$, then $\es(g_*,\eps,\gamma)=1/{\sqrt n}$,
$$
\sup_{\sqrt{n}\le  z <\eps\sqrt{n}} \left \{ \frac{\gamma}z \abs{\mu_1(z)} + \sigma_1^2(z) \right \} = \sup_{\sqrt{n}\le  z <\eps\sqrt{n}}\I(z\le1)=\I(n=1)
$$
and hence,
$$
\es(g_0,\eps,\gamma) =\max\big\{\tfrac1{\sqrt n},\I(n=1)\big\} =\tfrac1{\sqrt n},\quad \eps>1,
$$
so that we may write for all $\eps>0$
$$
\es(g_0,\eps,\,\cdot\,)=\min\big\{\eps,\tfrac1{\sqrt n}\big\} =\es(g_*,\eps,\,\cdot\,)=\es(g_*,\eps\wedge1,\,\cdot\,).
$$
If $p>1/2$, then
$$
\sup_{\sqrt{n}\le  z <\eps\sqrt{n}} \left \{ \frac{\gamma}z \abs{\mu_1(z)} + \sigma_1^2(z) \right \} =\sup_{\sqrt{n}\le  z <\eps\sqrt{n}} 
\begin{cases}
\frac{\gamma}z \sqrt{\frac{q^3}{p}} + p, & z \le \sqrt{\frac{p}{q}}, 
\\
\frac{\gamma(p-q)}{z\sqrt{pq}}, & z > \sqrt{\frac{p}{q}},
\end{cases}= 
$$
$$
=
\begin{cases}
\gamma\sqrt{\frac{q^3}{np}} + p, &   n\eps^2\le \frac pq,
\\
\max\left\{\gamma\sqrt{\frac{q^3}{np}} + p,\frac{\gamma(p-q)}p\right\}, & n\le\frac pq<n\eps^2,
\\
\frac{\gamma(p - q)}{\sqrt{npq}}, & n>\frac pq,
\end{cases}
$$
and with the account of $p\le\sqrt{\frac{p^3}{nq}}$, $\frac{\gamma(p-q)}{p}\le \frac{\gamma(p-q)}{\sqrt{npq}}$ for $n\le\frac pq$,we finally obtain
$$
\es(g_0,\eps,\gamma) = \es(g_*,1,\gamma) = 
\begin{cases}
\max\left\{ \sqrt{\frac{q}{np}}, \gamma \sqrt{\frac{q^3}{np}} + p\right\}, & n\eps^2\le \frac pq,
\\
\max\left\{
\sqrt{\frac{q}{np}},
\gamma\sqrt{\frac{q^3}{np}} + \sqrt{\frac{p^3}{nq}},
\frac{\gamma(p-q)}{\sqrt{npq}}
\right\},& n\eps^2>\frac pq,
\end{cases}
$$
for all $\eps>1$ and $\gamma>0$, that is, for all $\eps>0$ the identity
$$
\es(g_0,\eps,\,\cdot\,) = \es(g_*,\eps\wedge1,\,\cdot\,) 
$$
holds also for $p>\frac12$. In particular, with $\gamma=1$ and $p>\frac12$ for all $\eps>0$ we have
$$
\es(g_*,\eps,1)=
\begin{cases}
\eps, & n\eps^2\le\frac{q}{p}, 
\\
\max\left\{ \sqrt{\frac{q}{np}}, \sqrt{\frac{q^3}{np}} + p\eps \right\} =\sqrt{\frac{q^3}{np}} + p\eps, 
& {\frac{q}{p}}<n\eps^2\le\frac{p}{q}, 
\\
\max\left\{q, q^2 + p^2,p-q\right\}/\sqrt{npq} =\frac{q^2+p^2}{\sqrt{npq}}, 
& n\eps^2>\frac{p}{q},
\end{cases}
$$
$$
\es(g_*,\infty,1)=
\frac{q^2+p^2}{\sqrt{npq}},
$$
$$
\es(g_*,1,1)=\es(g_0,1,1)=\es(g_0,\infty,1) = 
\begin{cases}
\sqrt{\frac{q^3}{np}} + p, & n\le\frac{p}{q}, 
\\[2mm]
\frac{q^2+p^2}{\sqrt{npq}}, & n>\frac{p}{q}.
\end{cases}
$$

2) Let us find  $\es(g_{\textsc{c}},\eps,\gamma)$ and $\es(g_1,\eps,\gamma)$. For all $n\in\N$, $\eps,\,\gamma>0$ we have
\begin{multline*}
\es(g_{\textsc{c}},\eps,\gamma) =\sup_{0<z<\eps\sqrt{n}}\left\{\frac{\gamma}z \abs{\mu_1(z)} + \sigma_1^2(z)\right\}
 = \sup_{0<z<\eps\sqrt{n}}\begin{cases}
1, & z \le \sqrt{\frac{q}{p}},
\\
\frac{\gamma}{z}  \sqrt{\frac{q^3}{p}} + p, & \sqrt{\frac{q}{p}} < z \le \sqrt{\frac{p}{q}},
\\
\frac{\gamma(p - q)}{z\sqrt{pq}}, & z > \sqrt{\frac{p}{q}},
\end{cases}=
\\
= \begin{cases}
1, & n\eps^2\le{\frac{q}{p}},
\\
\max\left\{ 1, \gamma q + p \right\}, & \frac{q}{p}<n\eps^2\le\frac{p}{q},
\\
\max\left\{1, (\gamma q + p)\I(p>\frac12), \frac{\gamma(p - q)}{p}  \right\}, & n\eps^2>\frac{p}{q},
\end{cases}
\end{multline*}
in particular, $\es(g_{\textsc{c}},\eps,\gamma)\equiv1$ for $p=\tfrac12$ and all $\eps,\gamma>0,$ as well as for $p>\tfrac12$, $\eps>0,\ \gamma\le1.$  For $\eps>1$ we have
\begin{multline*}
\sqrt{n}\es(g_1,\eps,\gamma) =\sup_{0 < z < \eps\sqrt{n}} \frac{z\vee\sqrt{n}}{z} \left \{ \gamma \abs{\mu_1(z)} + z\sigma_1^2(z) \right \}=
\\
=\max\bigg\{\sup_{0 < z < \sqrt{n}} \frac{\sqrt{n}}{z} \left \{ \gamma \abs{\mu_1(z)} + z\sigma_1^2(z) \right \}, \sup_{\sqrt{n}\le  z <\eps\sqrt{n}} \left \{ \gamma \abs{\mu_1(z)} + z\sigma_1^2(z) \right \}\bigg \}=
\\
=\max\bigg \{ \sqrt{n}\es(g_{\textsc{c}},1,\gamma),
\sup_{\sqrt{n}\le  z <\eps\sqrt{n}} \left \{ \gamma \abs{\mu_1(z)} + z\sigma_1^2(z) \right \}\bigg \},\quad \gamma>0.
\end{multline*}
If $p=q=1/2$, then 
$$
\sup_{\sqrt{n}\le  z <\eps\sqrt{n}} \left \{ \gamma \abs{\mu_1(z)} + z\sigma_1^2(z) \right \}= \sup_{\sqrt{n}\le  z <\eps\sqrt{n}}z\I(z\le1) =\I(n=1),
$$
and hence,
$$
\es(g_1,\eps,\gamma) = \max\{1,\I(n=1)/\sqrt n\}=1,\quad \eps>1,\gamma>0,\ n\in\N;
$$
otherwise
$$
\sup_{\sqrt{n}\le  z <\eps\sqrt{n}} \left \{ \gamma \abs{\mu_1(z)} + z\sigma_1^2(z) \right \}
=
$$
$$
=
\sup_{\sqrt{n}\le  z <\eps\sqrt{n}}
\begin{cases}
\gamma \sqrt{\frac{q^3}{p}} + zp, &  z \le \sqrt{\frac{p}{q}},
\\
\frac{\gamma(p - q)}{\sqrt{pq}}, & z > \sqrt{\frac{p}{q}},
\end{cases}
=
\begin{cases}
{\gamma q^{3/2}}/{\sqrt{p}}+p\eps\sqrt{n}, &   n\eps^2\le \frac pq,
\\
{\max\{\gamma q^2+p^2,\gamma(p-q)\}}/{\sqrt{pq}}, & n\le\frac pq<n\eps^2,
\\
{\gamma(p - q)}/{\sqrt{pq}}, & n>\frac pq,
\end{cases}
$$
and hence,
$$
\es(g_1,\eps,\gamma) = 
\begin{cases}
\max\big\{1, \gamma q + p, \frac{\gamma q^{3/2}}{\sqrt{np}}+p\eps \big\}, \quad n\eps^2\le \frac pq,
\\
\max\big\{1, \gamma q + p, {\max\{\gamma q^2+p^2,\gamma(p - q)\}}/{\sqrt{npq}}\big\}, \quad n\le \frac pq<n\eps^2,
\\
\max\big\{1, \gamma q + p, \frac{\gamma(p-q)}{p}, \frac{\gamma(p-q)}{\sqrt{npq}}\big\}
=\max\big\{1, \gamma q + p, \frac{\gamma(p-q)}{p}\big\}, \quad n> \frac pq,
\end{cases}
$$
for all $\eps>1$ and $\gamma>0$. In particular, 
$$
\es(g_1,\eps,1) = 
\begin{cases}
\max\big\{1, \frac{q^{3/2}}{\sqrt{np}}+p\eps \big\}, & n\eps^2\le \frac pq,
\\
\max\left \{1, \frac{q^2+p^2}{\sqrt{npq}}\right \}, & n\le \frac pq<n\eps^2,
\\
1, & n>\frac pq,
\end{cases}
\quad p>\tfrac12,\ \eps>1.
$$

3) Let us compute $\roz(g_*,\eps,\gamma)$ and $\roz(g_0,\eps,\gamma)$. 
With the account of
$$
\sup_{0<z\le\eps\sqrt{n}}z\sigma_1^2(z)
=\sup_{0<z\le\eps\sqrt{n}}
\begin{cases}
z,& z\le\sqrt{\frac{q}{p}},
\\
pz, &\sqrt{\frac{q}{p}}<z\le\sqrt{\frac{p}{q}},
\\
0, &z > \sqrt{\frac{p}{q}},
\end{cases}
=\begin{cases}
\eps\sqrt n, \quad n\eps^2\le\frac{q}{p}, 
\\
\max\left\{ \sqrt{\frac{q}{p}}, p\eps\sqrt n \right\}, \quad \frac{q}{p}<n\eps^2\le\frac{p}{q}, 
\\
\max\left\{ \sqrt{\frac{q}{p}}, \sqrt{\frac{p^3}q}\I(p>\frac12) \right\}, \quad n\eps^2>\frac{p}{q},
\end{cases}
$$
and $\max\left\{ \sqrt{\frac{q}{p}}, \sqrt{\frac{p^3}q}\I(p>\frac12) \right\}=\max\left\{ \sqrt{\frac{q}{p}}, \sqrt{\frac{p^3}q}\right\}$, $p\in[\frac12,1)$, for all $n\in\N$, $\eps,\gamma>0$ we obtain
$$
\roz(g_*,\eps,\gamma)
=\frac1{\sqrt{n}}\Big(\gamma\abs{\mu_1(\eps\sqrt{n})} +\sup_{0<z<\eps\sqrt{n}}z\sigma_1^2(z)\Big)=
$$
$$
=\begin{cases}
\eps, & n\eps^2\le\frac{q}{p}, 
\\
\gamma\sqrt{\frac{q^3}{np}}+\max\left\{ \sqrt{\frac{q}{np}}, p\eps \right\}, & \frac{q}{p}<n\eps^2\le\frac{p}{q}, 
\\
\frac{\gamma(p-q)}{\sqrt{npq}} +\max\left\{ \sqrt{\frac{q}{np}}, \sqrt{\frac{p^3}{nq}}\right\}, & n\eps^2>\frac{p}{q},
\end{cases}
$$
in particular, $\roz(g_*,\eps,\gamma)=\min\{\eps,\frac1{\sqrt n}\}$ for $p=\frac12$. Now let $\eps>1$. Taking into account that $\sigma_1^2(z)$ does not increase with respect to $z\ge0$, and using the just computed $\sup\limits_{0<z\le\eps\sqrt{n}}z\sigma_1^2(z)$, we obtain
$$
\roz(g_0,\eps,\gamma)
=\frac1{\sqrt{n}}\Big(\frac{\gamma}{\eps}\abs{\mu_1(\eps\sqrt{n})} +\sup_{0<z<\eps\sqrt{n}}\min\{z,\sqrt n\}\sigma_1^2(z)\Big)=
$$
$$
=\frac1{\sqrt{n}}\Big(\frac{\gamma}{\eps}\abs{\mu_1(\eps\sqrt{n})} +\sup_{0<z\le \sqrt{n}}z\sigma_1^2(z)\Big)=
$$
$$
=\begin{cases}
\frac{\gamma}{\eps}\sqrt{\frac{q^3}{np}}+1
,& n\le\frac{q}{p}<n\eps^2\le\frac{p}{q}, 
\\
\frac{\gamma(p-q)}{\eps\sqrt{npq}}+1
,& n\le\frac{q}{p}\le\frac{p}{q}<n\eps^2, 
\\
\frac{\gamma}{\eps}\sqrt{\frac{q^3}{np}}
+\max\left\{ \sqrt{\frac{q}{np}}, p \right\}
,& \frac{q}{p}<n<n\eps^2\le\frac{p}{q}, 
\\
\frac{\gamma(p-q)}{\eps\sqrt{npq}}
+\max\left\{ \sqrt{\frac{q}{np}}, p \right\}
,& \frac{q}{p}<n\le\frac{p}{q}<n\eps^2, 
\\
\frac{\gamma(p-q)}{\eps\sqrt{npq}}
+\max\left\{ \sqrt{\frac{q}{np}}, \sqrt{\frac{p^3}{nq}}
\right\}
,& n>\frac{p}{q},
\end{cases}
$$
in particular, for $p=\frac12$ we have $\roz(g_0,\eps,\,\cdot\,)=\frac1{\sqrt n} =\min\{\eps,\frac1{\sqrt n}\}=\roz(g_*,\eps,\,\cdot\,)$ for all $\eps>1$, and hence, for all $\eps>0$.

4) Let us compute $\roz(g_{\textsc{c}},\eps,\gamma)$ and $\roz(g_1,\eps,\gamma)$. Taking into account that $\sigma_1^2(z)$ does not increase with respect to $z\ge0$ and $\sigma_1^2(0)=\sigma_1^2=1$, for all $n\in\N$, $\eps,\gamma>0$ and $p\in[\frac12,1)$ we obtain
$$
\roz(g_{\textsc{c}},\eps,\gamma) =\frac{\gamma}{\eps\sqrt{n}}\abs{\mu_1(\eps\sqrt{n})} +\sup_{0<z<\eps\sqrt{n}}\sigma_1^2(z)= \frac{\gamma}{\eps\sqrt{n}}\abs{\mu_1(\eps\sqrt{n})}+1=
$$
$$
=\begin{cases}
1, & n\eps^2\le\frac{q}{p}, 
\\
\frac{\gamma}{\eps}\sqrt{\frac{q^3}{np}}+1, & \frac{q}{p}<n\eps^2\le\frac{p}{q}, 
\\
\frac{\gamma(p-q)}{\eps\sqrt{npq}}+1, & n\eps^2>\frac{p}{q},
\end{cases}
$$
in particular, $\roz(g_{\textsc{c}},\,\cdot\,,\,\cdot\,)\equiv1$ for $p=\frac12$ and $\roz(g_{\textsc{c}},\infty,\,\cdot\,)\equiv1$ for all $p\in[\frac12,1)$. Now let $\eps>1$. With the account of
\begin{multline*}
\sup_{\sqrt{n}\le z<\eps\sqrt{n}}z\sigma_1^2(z)= 
\\
=\sup_{\sqrt{n}\le z<\eps\sqrt{n}}
\begin{cases}
z,& z\le\sqrt{\frac{q}{p}},
\\
pz, &\sqrt{\frac{q}{p}}<z\le\sqrt{\frac{p}{q}},
\\
0, &z > \sqrt{\frac{p}{q}},
\end{cases}
=\begin{cases}
\max\left\{ \sqrt{\frac{q}{p}}, p\sqrt{n\eps^2\wedge \frac pq} \right\}, & n\le\frac{q}{p}<n\eps^2, 
\\
p\sqrt{n\eps^2\wedge \frac pq}, & \frac{q}{p}<n\le \frac pq, 
\\
0,&n>\frac pq,
\end{cases}
\end{multline*}
we have
$$
\sqrt{n}\roz(g_1,\eps,\gamma) =\gamma\abs{\mu_1(\eps\sqrt{n})} +\sup_{0<z<\eps\sqrt{n}}(z\vee\sqrt{n})\sigma_1^2(z)=
$$
$$
=\gamma\abs{\mu_1(\eps\sqrt{n})} +\max\Big\{\sqrt{n},\sup_{\sqrt{n}\le z<\eps\sqrt{n}} z\sigma_1^2(z) \Big\}=
$$
$$
=\begin{cases}
\gamma\sqrt{\frac{q^3}p}+\max\left\{ \sqrt{\frac{q}{p}}, p\eps\sqrt{n},\sqrt n\right\} 
=\gamma\sqrt{\frac{q^3}p}+\max\left\{ \sqrt{\frac{q}{p}}, p\eps\sqrt{n}\right\}, 
& n\le\frac{q}{p}<n\eps^2\le\frac{p}{q}, 
\\
\frac{\gamma(p-q)}{\sqrt{pq}}+\max\left\{ \sqrt{\frac{q}{p}}, \sqrt{\frac{p^3}q},\sqrt n\right\}
=\frac{\gamma(p-q)}{\sqrt{pq}}+\max\left\{ \sqrt{\frac{q}{p}}, \sqrt{\frac{p^3}q}\right\}, 
& n\le\frac{q}{p}\le\frac{p}{q}<n\eps^2, 
\\
\gamma\sqrt{\frac{q^3}p}+\max\left\{ p\eps\sqrt{n}, \sqrt{n}\right\},
& \frac{q}{p}<n<n\eps^2\le\frac{p}{q}, 
\\
\frac{\gamma(p-q)}{\sqrt{pq}}+\max\left\{ \sqrt{\frac{p^3}q}, \sqrt{n}\right\},
& \frac{q}{p}<n\le\frac{p}{q}<n\eps^2, 
\\
\frac{\gamma(p-q)}{\sqrt{pq}}+\sqrt{n},
& n>\frac{p}{q},
\end{cases}
$$
in particular, $\roz(g_1,\eps,\gamma)\equiv 1$ for $p=\frac12$.
\end{proof}

\subsection{Computation of the uniform distance}

In the present section we compute the uniform distance  $\Delta_1(p)$
between the d.f. of~\eqref{two_point_dist} and the standard normal d.f. Let us denote
\begin{equation}\label{Psi_func}
\Psi(x) = \frac{1}{1 + x^2} - \Phi(-\abs{x}) = \Phi(\abs{x}) - \frac{x^2}{1+x^2}, \quad x \in \R,
\end{equation}
\begin{equation}\label{PsiWave_func}
\wave{\Psi}(p) = \Psi\left(\sqrt{\frac{1-p}{p}}\right) = \Phi\left(\sqrt{\frac{1-p}{p}}\right) - (1 - p), \quad 0 < p < 1.
\end{equation}
Note that $\Psi(x)$  is an even function,  therefore, it suffices to investigate it only for $x \ge 0.$

\begin{Lemma}[see \cite{ChebotarevKondrikMikhailov2007}]\label{ChebotarevPsi} 
The function $\Psi(x)$ is positive for $x \ge 0,$ increases for $0 < x < x_{\Phi},$ decreases for $x > x_{\Phi}$ and attains its maximal value $C_{\Phi} = 0.54093\ldots$ in the point  $x_{\Phi} = 0.213105\ldots,$ where $x_{\Phi}$ is the unique root of the equation
$$
xe^{x^2/2}(1+x^2)^{-2}=(8\pi)^{-1/2}.
$$
\end{Lemma}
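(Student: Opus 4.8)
The plan is to analyze the function $\Psi(x)$ defined in \eqref{Psi_func} directly by elementary calculus on $[0,\infty)$, exploiting the evenness already noted. First I would record the explicit form $\Psi(x)=\Phi(x)-\frac{x^2}{1+x^2}$ for $x\ge0$ and compute its derivative. Since $\Phi'(x)=\frac{1}{\sqrt{2\pi}}e^{-x^2/2}$ and $\frac{d}{dx}\frac{x^2}{1+x^2}=\frac{2x}{(1+x^2)^2}$, we obtain
\begin{equation*}
\Psi'(x)=\frac{1}{\sqrt{2\pi}}e^{-x^2/2}-\frac{2x}{(1+x^2)^2}.
\end{equation*}
The critical points therefore satisfy $\frac{1}{\sqrt{2\pi}}e^{-x^2/2}=\frac{2x}{(1+x^2)^2}$, which after multiplying by $\frac12 e^{x^2/2}(1+x^2)^2$ rearranges exactly to the stated equation $xe^{x^2/2}(1+x^2)^{-2}=(8\pi)^{-1/2}$. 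This recovers the characterization of the maximizer $x_\Phi$ as a root of that transcendental equation.

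Next I would establish monotonicity of $\Psi$ on each side of $x_\Phi$, which amounts to showing $\Psi'$ changes sign exactly once on $(0,\infty)$, i.e. that the auxiliary function $h(x):=xe^{x^2/2}(1+x^2)^{-2}$ crosses the level $(8\pi)^{-1/2}$ exactly once from below to above and stays above. I expect this to be the main obstacle, since $h$ is not globally monotone: as $x\to0^+$ we have $h(x)\to0$, and as $x\to\infty$ the exponential forces $h(x)\to\infty$, but one must verify there is a \emph{single} relevant crossing governing the sign of $\Psi'$. The cleanest route is to study the sign of $\Psi'$ itself: note $\Psi'(0)=\frac{1}{\sqrt{2\pi}}>0$, so $\Psi$ increases near $0$; and for large $x$ the term $\frac{1}{\sqrt{2\pi}}e^{-x^2/2}$ decays faster than $\frac{2x}{(1+x^2)^2}$ (which is $\sim 2/x^3$), so $\Psi'(x)<0$ eventually, giving a sign change. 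To pin down uniqueness I would examine $g(x):=\sqrt{2\pi}\,\Psi'(x)(1+x^2)^2 e^{x^2/2}/(2x)$ type reformulations, or more directly show that $e^{x^2/2}\Psi'(x)\cdot\sqrt{2\pi}$, written as $1-\sqrt{2\pi}\,\frac{2x}{(1+x^2)^2}e^{x^2/2}$, is strictly decreasing once $h$ becomes increasing; a short computation of $h'$ shows $h'(x)>0$ precisely when $1+x^2-4x^2+x^2(1+x^2)=x^4-2x^2+1=(x^2-1)^2\cdot(\text{positive})$ type factorization confirms $h$ is increasing on the relevant range, forcing the single crossing.

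Once the unique sign change of $\Psi'$ at $x_\Phi\in(0,\infty)$ is secured, the qualitative conclusions follow: $\Psi$ increases on $(0,x_\Phi)$, decreases on $(x_\Phi,\infty)$, and hence attains its global maximum on $[0,\infty)$ at $x_\Phi$. Positivity of $\Psi$ for $x\ge0$ I would verify separately and cheaply: $\Psi(0)=\frac12>0$, and since $\Psi$ decreases to its limit as $x\to\infty$, it suffices to check $\lim_{x\to\infty}\Psi(x)=\lim_{x\to\infty}\bigl(\Phi(x)-\frac{x^2}{1+x^2}\bigr)=1-1=0$, so $\Psi(x)>0$ throughout $[0,\infty)$ by the established monotonicity (it decreases toward $0$ but never reaches it). Finally the numerical values $x_\Phi=0.213105\ldots$ and $C_\Phi=\Psi(x_\Phi)=0.54093\ldots$ are obtained by numerically solving the transcendental equation and substituting back; since the statement cites \cite{ChebotarevKondrikMikhailov2007}, I would simply reference that source for the numerics rather than reproducing the root-finding. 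The whole argument is elementary calculus; the only genuinely delicate point is the uniqueness of the critical point, which rests on the monotonicity of $h$ on the relevant interval.
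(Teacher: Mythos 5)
Your argument is correct, and it is worth noting that the paper itself does not prove this lemma at all: it is imported wholesale from the cited reference \cite{ChebotarevKondrikMikhailov2007} (the accompanying Remark even concedes that the monotonicity intervals are not in that reference's formulation, only in its proof). So you have supplied a self-contained elementary proof where the paper relies on a citation. The calculus checks out: $\Psi'(x)=\frac{1}{\sqrt{2\pi}}e^{-x^2/2}-\frac{2x}{(1+x^2)^2}$, and the sign of $\Psi'$ is governed by comparing $h(x)=xe^{x^2/2}(1+x^2)^{-2}$ with the level $(8\pi)^{-1/2}$. Your key computation can be stated more cleanly than your hedged ``on the relevant range'': taking the logarithmic derivative gives
$$
\frac{h'(x)}{h(x)}=\frac1x+x-\frac{4x}{1+x^2}=\frac{(x^2-1)^2}{x(1+x^2)}\ \ge\ 0,
$$
with equality only at $x=1$, so $h$ is strictly increasing on \emph{all} of $(0,\infty)$, from $h(0+)=0$ to $h(\infty)=\infty$. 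Hence there is exactly one crossing of the level $(8\pi)^{-1/2}$, $\Psi'>0$ to its left and $\Psi'<0$ to its right, and your worry that $h$ might fail to be globally monotone is unfounded. Positivity then follows exactly as you say from $\Psi(0)=\tfrac12$, the increase up to $x_\Phi$, and the decrease toward the limit $\lim_{x\to\infty}\Psi(x)=1-1=0$, which is approached but never attained. Deferring the numerical values $x_\Phi=0.213105\ldots$ and $C_\Phi=\Psi(x_\Phi)=0.54093\ldots$ to root-finding (or to the cited source) is fine. The only cosmetic flaw is the garbled sentence around the factorization of $h'$; replacing it with the displayed identity above closes the one genuinely delicate step.
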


\begin{Remark}
The statement about  the interval of monotonicity is absent in the formulation of the corresponding lemma in~\cite{ChebotarevKondrikMikhailov2007}, but these intervals were  investigated  in the proof.
\end{Remark}

The definition of~$\wave{\Psi}$ and lemma~\ref{ChebotarevPsi} immediately imply that the function
$$
\wave{\Psi}(p) := \Phi\left(\sqrt{\frac{1-p}{p}}\right) - (1 - p)
$$
is positive for $p \in (0, 1),$ increases for $0 < p < p_{\Phi}$, decreases for $p_{\Phi} < p < 1$ and
$$
\max_{p \in (0,1)}\wave{\Psi}(p) = \wave{\Psi}(p_{\Phi}) = C_{\Phi},
\quad \text{where }\ p_{\Phi} = \frac{1}{x_{\Phi}^2 + 1} = 0.9565\ldots\ .
$$


\begin{Lemma}\label{phiLemma}
On the interval $p \in (0,1)$ the function
$$
\phi(p):= \wave{\Psi}(1-p) + p =\Phi\left( \sqrt{ \frac{p}{1-p} } \right)
$$
monotonically increases, while its derivative
$$
\phi'(p) = \frac{e^{-\frac{p}{2(1-p)}}}{2\sqrt{2\pi}\sqrt{p(1-p)^3}}
$$
monotonically decreases.
\end{Lemma}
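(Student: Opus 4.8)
The plan is to prove both claims by direct computation, since the function $\phi(p)=\Phi\bigl(\sqrt{p/(1-p)}\bigr)$ is an explicit composition of elementary functions with the standard normal d.f. First I would verify the stated identity $\phi(p)=\wave\Psi(1-p)+p$: by definition~\eqref{PsiWave_func} we have $\wave\Psi(1-p)=\Phi\bigl(\sqrt{p/(1-p)}\bigr)-p$, so adding $p$ immediately gives $\phi(p)=\Phi\bigl(\sqrt{p/(1-p)}\bigr)$, confirming the closed form. This reduces everything to analyzing the single expression $\Phi\bigl(\sqrt{p/(1-p)}\bigr)$ on $(0,1)$.

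For the monotonicity of $\phi$ itself I would simply differentiate. Writing $u(p)=\sqrt{p/(1-p)}$, the chain rule gives $\phi'(p)=\Phi'(u(p))\,u'(p)=\tfrac1{\sqrt{2\pi}}e^{-u(p)^2/2}u'(p)$. Here $u(p)^2=p/(1-p)$, so $e^{-u^2/2}=e^{-p/(2(1-p))}$, and a short computation yields $u'(p)=\tfrac{1}{2}\,(1-p)^{-3/2}p^{-1/2}=\tfrac1{2\sqrt{p(1-p)^3}}$. Combining these factors produces exactly the claimed formula
$$
\phi'(p)=\frac{e^{-\frac{p}{2(1-p)}}}{2\sqrt{2\pi}\sqrt{p(1-p)^3}}.
$$
Since every factor on the right is strictly positive for $p\in(0,1)$, we obtain $\phi'(p)>0$, establishing that $\phi$ is monotonically increasing.

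The more substantive part is showing that $\phi'$ is itself monotonically decreasing, i.e. $\phi''(p)<0$ on $(0,1)$. Rather than compute $\phi''$ in full, I would take logarithms: set $\psi(p)=\log\phi'(p)=-\tfrac{p}{2(1-p)}-\tfrac12\log p-\tfrac32\log(1-p)+\text{const}$, and show $\psi'(p)<0$, which suffices since $\phi'>0$. Differentiating term by term, the first term contributes $-\tfrac12\cdot\tfrac{d}{dp}\bigl(\tfrac{p}{1-p}\bigr)=-\tfrac1{2(1-p)^2}$, the second $-\tfrac1{2p}$, and the third $+\tfrac{3}{2(1-p)}$. I expect these to combine, after clearing the common denominator $2p(1-p)^2$, into a numerator that is a quadratic (or lower-degree) polynomial in $p$; the main obstacle will be checking that this numerator is negative throughout $(0,1)$. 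I would verify the sign by collecting terms and either factoring directly or bounding the polynomial on the unit interval, paying attention to the endpoint behavior as $p\to0+$ and $p\to1-$.

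Thus the proof splits cleanly into an identity verification, one elementary differentiation giving the stated $\phi'$ and its positivity, and a sign analysis of $\psi'=(\log\phi')'$ for the decreasing-derivative claim. I anticipate that the only place requiring genuine care is confirming the sign of the polynomial numerator in $\psi'$, which reduces the second assertion to a finite elementary inequality on $(0,1)$.
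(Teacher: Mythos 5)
Your proposal is correct and follows essentially the same route as the paper: both establish the increase of $\phi$ from the explicit positive derivative and prove the decrease of $\phi'$ by showing that $(\ln\phi')'$ has nonpositive sign via the polynomial numerator obtained after clearing the denominator $2p(1-p)^2$. The only point to watch is that this numerator is $-p-(1-p)^2+3p(1-p)=-(2p-1)^2$, which vanishes at $p=\tfrac12$ rather than being strictly negative throughout $(0,1)$; since it is nonpositive with a single isolated zero, $\ln\phi'$ (and hence $\phi'$) is still strictly decreasing, exactly as the paper concludes.
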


\begin{proof}
The function $\phi(p)$ monotonically increases as a superposition of monotonically increasing functions. Note that the derivative $\phi'(p)$ takes only positive values, hence, we may define its logarithm
   $$
    \ln\phi'(p) = -\frac{p}{2(1-p)} - \ln(2\sqrt{2\pi}) - \frac{1}{2}\ln p - \frac{3}{2} \ln (1-p) =: u(p).
    $$
The derivative
    $$
u'(p) = -\frac{1}{2(1-p)^2} - \frac{1}{2p} + \frac{3}{2(1-p)} = 0
    $$
changes its sign in the points $p\in(0,1)$ that are the roots of the equation
    $$
    -p -(1-p)^2 + 3p(1-p) = 0,
    $$
which has a unique solution $p=1/2$ on $(0,1)$.  Since
    $$
    \frac{d}{dp}u(p)\rvert_{p = \frac{1}{4}} = -\frac{8}{9}, \quad \frac{d}{dp}u(p)\rvert_{p = \frac{3}{4}} = -\frac{8}{3},
    $$
then $u(p)$ is strictly decreasing on $(0,1)$, and hence,  $\phi'(p)$ is also strictly decreasing on $(0,1).$    
\end{proof}

\begin{Theorem}\label{ThUniDist(2point,Phi)n=1}
Let $X_1$ have distribution~\eqref{two_point_dist}, then
$$
\Delta_1(p) := \sup_{x \in \R}\big|\Prob(X_1 < x) - \Phi(x)\big|= \wave{\Psi}( p \vee q ) = \begin{cases}
\Phi(\sqrt{p/q}) - p, \quad 0 < p < \frac{1}{2}, \\
\Phi(\sqrt{q/p}) - q, \quad \frac{1}{2} \le p < 1.
\end{cases}
$$
\end{Theorem}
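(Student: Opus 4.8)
The plan is to write the distribution function of $X_1$ explicitly, reduce the supremum over $x$ to a finite comparison of values at the two jump points, and then settle that comparison using the monotonicity and concavity facts recorded in Lemmas~\ref{ChebotarevPsi} and~\ref{phiLemma}. Setting $a=\sqrt{q/p}$ and $b=-\sqrt{p/q}$, the function $F(x):=\Prob(X_1<x)$ equals $0$ on $(-\infty,b]$, equals $q$ on $(b,a]$, and equals $1$ on $(a,\infty)$. On each interval of constancy the difference $D(x):=F(x)-\Phi(x)$ is continuous and strictly decreasing (since $\Phi$ is continuous and increasing), so $\sup_x|D(x)|$ is attained among the one-sided limits of $|D|$ at $b$ and $a$, the limits at $\pm\infty$ being $0$. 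This leaves four candidate values,
$$
C_1=\Phi(b),\quad C_2=|q-\Phi(b)|,\quad C_3=|q-\Phi(a)|,\quad C_4=1-\Phi(a).
$$
Since $\Delta_1$ is unchanged under the reflection $X_1\mapsto-X_1$, which turns the parameter $p$ into $q$ (one uses here that the Kolmogorov distance to the symmetric law $\Phi$ is reflection-invariant and that passing from the strict to the non-strict distribution function does not alter the supremum), I may assume $p\ge1/2$, hence $q\le1/2\le p$ and $b\le-1\le a\le1$.

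Next I would express the candidates through $\phi(p)=\Phi(\sqrt{p/(1-p)})$ and $\wave{\Psi}$. From $\Phi(b)=1-\phi(p)$, $\Phi(a)=\phi(q)$, together with $\Phi(b)\le q\le\Phi(a)$ (which follow from $\phi(p)\ge p$ and $\phi(q)\ge\Phi(0)=1/2\ge q$), one obtains
$$
C_1=1-\phi(p),\quad C_2=\phi(p)-p=\wave{\Psi}(q),\quad C_3=\phi(q)-q=\wave{\Psi}(p),\quad C_4=1-\phi(q).
$$
It then suffices to prove $C_3\ge\max\{C_1,C_2,C_4\}$, for this gives $\Delta_1(p)=C_3=\wave{\Psi}(p)=\wave{\Psi}(p\vee q)$, and the general statement follows by the symmetry reduction. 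The inequality $C_1\le C_4$ is immediate, since $b\le-a$ and the monotonicity of $\Phi$ give $\Phi(b)\le\Phi(-a)=1-\Phi(a)$; hence $C_1$ causes no trouble once $C_4\le C_3$ is established.

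The two genuine inequalities are $C_4\le C_3$ and $C_2\le C_3$. For $C_4\le C_3$ I would note that it is equivalent to $2\phi(q)\ge1+q$; putting $g(q):=2\phi(q)-q-1$, Lemma~\ref{phiLemma} yields $g''=2\phi''\le0$, so $g$ is concave on $[0,1/2]$ and attains its minimum at an endpoint, where $g(0)=0$ and $g(1/2)=2\Phi(1)-3/2>0$, whence $g\ge0$. For $C_2\le C_3$, which reads $\wave{\Psi}(q)\le\wave{\Psi}(p)$ with $q=1-p\le1/2\le p$, I would split on the position of $p$ relative to the maximizer $p_{\Phi}$ of $\wave{\Psi}$: if $p\le p_{\Phi}$ then $q\le p\le p_{\Phi}$ both lie where $\wave{\Psi}$ increases and the inequality is immediate; if $p>p_{\Phi}$ then $q<1-p_{\Phi}<1/2$ forces $\wave{\Psi}(q)<q<1/2$ (because $\Phi(\cdot)<1$ gives $\wave{\Psi}(q)<q$ for $q<1/2$), while the decrease of $\wave{\Psi}$ on $(p_{\Phi},1)$ toward its limiting value $1/2$ gives $\wave{\Psi}(p)>1/2$, so again $\wave{\Psi}(q)<\wave{\Psi}(p)$.

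I expect the comparison $C_2\le C_3$ to be the main obstacle: it is the only one relating values of $\wave{\Psi}$ at two distinct arguments, and a direct derivative analysis of $m(p)=\wave{\Psi}(p)-\wave{\Psi}(1-p)$ is unpleasant, since $m'$ depends on $\phi'(p)+\phi'(1-p)$, a sum of a decreasing and an increasing term whose monotonicity is not transparent. Routing this step through the already-established shape of $\wave{\Psi}$ from Lemma~\ref{ChebotarevPsi}, via the case split at $p_{\Phi}$, is what keeps the argument clean, and I would organize the proof around it.
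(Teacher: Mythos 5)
Your proposal is correct and follows essentially the same route as the paper: reduce $\Delta_1(p)$ to the four candidate values $1-\phi(p)$, $\wave{\Psi}(q)$, $\wave{\Psi}(p)$, $1-\phi(q)$, use the $p\leftrightarrow q$ symmetry to assume $p\ge\tfrac12$, and show $\wave{\Psi}(p)$ dominates via Lemmas~\ref{ChebotarevPsi} and~\ref{phiLemma}. Your two key steps differ only cosmetically from the paper's (concavity of $\phi$ with endpoint evaluation in place of the paper's stationary-point analysis of $f_5=f_3-f_4$, and a case split at $p_\Phi$ in place of comparing both $\wave{\Psi}(q)$ and $\wave{\Psi}(p)$ with $\wave{\Psi}(\tfrac12)$), and both are sound.
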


\begin{proof}
Note that
$$
\Delta_1(p) = \max\left\{\Phi(-\sqrt{p/q}), \big|\Phi(-\sqrt{p/q}) - q\big|, \big|\Phi(\sqrt{q/p})-q\big|, 1 - \Phi(\sqrt{q/p})\right\} = 
$$ 
$$
= \max \big\{ 1 - \phi(p), \wave{\Psi}(q), \wave{\Psi}(p), 1 - \phi(q) \big\} 
= \max \big\{ f_1(p), f_2(p), f_3(p), f_4(p) \big\},
$$
where
\begin{align*}
f_1(p) &= 1 - \phi(p), \\
f_2(p) &= \wave{\Psi}(1-p) = \phi(p) - p, \\
f_3(p) &= \wave{\Psi}(p) = \phi(1 - p) - (1 - p), \\
f_4(p) &= 1 - \phi(1-p).
\end{align*}
    
Let $p \ge q.$ Then $f_1(p) \le f_4(p),$ and it suffices to show that
$$
\max\big\{f_2(p), f_4(p)\big\} \le f_3(p).
$$
Let us prove that $f_5(p):=f_3(p)-f_4(p)\ge0$. Using lemma~\ref{phiLemma}, we conclude that the derivative
$$
f_5'(p):= \frac{d}{dp}( 2\phi(1-p) + p - 2) = 2\frac{d}{dp}\phi(1-p) + 1 = 1 - \frac{e^{-\frac{(1-p)}{2p}}}{\sqrt{2\pi}\sqrt{(1-p)p^3}}
$$
decreases on  $(0,1)$ with
$$
f_5'\big (\tfrac12\big ) = 1 - \tfrac{4}{\sqrt{2\pi e}} = 0.0321\ldots,\quad \lim_{p\to1-0}f_5'(p)=-\infty,
$$
hence, $f_5(p)$ has a unique stationary point on $(0,1)$, which is a point of maximum, so that
$$
\inf_{\frac{1}{2} \le p < 1} f_5(p) = \min\Big\{ f_5\big(\tfrac{1}{2}\big) , \lim_{p\to1-}f_5(p) \Big\} = \min\left\{ 2\Phi(1) - \tfrac{3}{2}, 0 \right\} = \min\left\{0.1826\ldots, 0\right\} = 0.
$$
The inequality $f_2(p) \le f_3(p)$ follows from the properties of the function $\wave{\Psi}$ with the account of
$$
\wave{\Psi}\big(\tfrac{1}{2}\big) \wedge \wave{\Psi}(1-0) = \wave{\Psi}\big(\tfrac{1}{2}\big).
$$

Thus, the theorem has been proved in the case $p \ge q$. The validity of the statement of the theorem for  $p < q$ follows from that $f_1(p) = f_4(q)$ and $f_2(p) = f_3(q)$.
\end{proof}

\begin{Lemma}[see \cite{Agnew1957,ChebotarevKondrikMikhailov2007}] \label{ChebotarevBound}
For an arbitrary d.f. $F$ with zero mean and unit variance we have
$$
\sup_{x \in \R}\abs{F(x) - \Phi(x)} \le \sup_{x \in \R}\Psi(x) = \Psi(x_{\Phi}) =  \wave{\Psi}(p_{\Phi}) = \Delta_1(p_{\Phi}) =: C_{\Phi} = 0.54093\ldots.
$$
\end{Lemma}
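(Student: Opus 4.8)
The plan is to reduce the claimed bound to the one-sided Chebyshev (Cantelli) inequality together with a short case analysis in the sign of $x$; the constant $C_\Phi$ then enters only through the identity $\sup_x\Psi(x)=C_\Phi$ already recorded in Lemma~\ref{ChebotarevPsi}. Let $X$ have d.f.\ $F$ with $\E X=0$, $\D X=1$. The key tool is Cantelli's inequality: for every $t>0$,
$$
\Prob(X\ge t)\le\frac1{1+t^2},\qquad \Prob(X\le -t)\le\frac1{1+t^2},
$$
the second bound following from the first applied to $-X$. To keep the argument self-contained I would include its one-line proof: for $s\ge0$ with $t+s\ge0$, $\Prob(X\ge t)=\Prob(X+s\ge t+s)\le\E(X+s)^2/(t+s)^2=(1+s^2)/(t+s)^2$, and minimising over $s$ gives the optimum at $s=1/t$, with value $1/(1+t^2)$.

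Next I would split $\sup_x|F(x)-\Phi(x)|$ into its four one-sided differences and bound each. For $x\le0$, writing $x=-t$ with $t\ge0$, the left-tail bound gives $F(x)=\Prob(X<-t)\le\Prob(X\le -t)\le 1/(1+t^2)$, whence
$$
F(x)-\Phi(x)\le\frac1{1+x^2}-\Phi(-|x|)=\Psi(x).
$$
Symmetrically, for $x\ge0$ the right-tail bound gives $F(x)=1-\Prob(X\ge x)\ge x^2/(1+x^2)$, so that
$$
\Phi(x)-F(x)\le\Phi(x)-\frac{x^2}{1+x^2}=\Psi(x).
$$
In the two remaining \emph{wrong-sign} regimes the crude monotone bounds suffice: for $x\le0$ one has $\Phi(x)-F(x)\le\Phi(x)\le\tfrac12$, and for $x\ge0$ one has $F(x)-\Phi(x)\le1-\Phi(x)\le\tfrac12$.

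Finally I would combine the four estimates. By Lemma~\ref{ChebotarevPsi}, $\Psi(x)\le C_\Phi$ for all $x$, and since $C_\Phi=0.54093\ldots>\tfrac12$, every one of the four one-sided differences is at most $C_\Phi$; taking the supremum over $x\in\R$ yields $\sup_x|F(x)-\Phi(x)|\le C_\Phi$. The chain of equalities in the statement is not new content: $\sup_x\Psi(x)=\Psi(x_\Phi)=C_\Phi$ is Lemma~\ref{ChebotarevPsi}; the middle identity $\Psi(x_\Phi)=\wave\Psi(p_\Phi)$ follows from $p_\Phi=1/(x_\Phi^2+1)$ and the definition~\eqref{PsiWave_func} of $\wave\Psi$; and $\wave\Psi(p_\Phi)=\Delta_1(p_\Phi)$ is Theorem~\ref{ThUniDist(2point,Phi)n=1} evaluated at $p_\Phi$ (recall $\Delta_1(p)=\wave\Psi(p\vee q)$ with $q=1-p$). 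Thus the only substantive assertion is the inequality, which exhibits the two-point law at $p=p_\Phi$ as extremal over all zero-mean, unit-variance distributions.

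I expect no serious obstacle, as the whole argument rests on Cantelli's inequality. The one subtlety worth flagging is that $\Psi$ controls only $F-\Phi$ on $x\le0$ and $\Phi-F$ on $x\ge0$, so the two wrong-sign regimes genuinely must be handled separately; it is then essential that the crude bound $\tfrac12$ there lies \emph{strictly} below $C_\Phi$, a feature special to this particular constant rather than a generic phenomenon. A minor point of care is the strict inequality in $F(x)=\Prob(X<x)$ versus the non-strict tail events, but passing from $\Prob(X<-t)$ to $\Prob(X\le-t)$ only enlarges the right-hand side in the favourable direction and so costs nothing.
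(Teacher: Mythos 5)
Your proof is correct, and it is worth noting that the paper does not actually prove this lemma at all: it is imported from the literature, with the accompanying Remark explaining that Agnew established the \emph{pointwise} bound $\abs{F(x)-\Phi(x)}\le\Psi(x)$ for every $x$, while Chebotarev, Kondrik and Mikhailov computed $\sup_x\Psi(x)=C_\Phi$ and showed attainment at a two-point law. Your Cantelli-based argument is a clean, self-contained substitute, but it proves something slightly weaker than Agnew's inequality: in the two wrong-sign regimes you only obtain the crude bound $\tfrac12$, not $\Psi(x)$ itself (indeed $\Psi(x)\to0$ as $\abs{x}\to\infty$, so there the pointwise statement requires the additional normal-tail estimate $2\Phi(-\abs{x})\le(1+x^2)^{-1}$, which Cantelli does not supply). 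For the lemma as stated this loss is harmless, precisely because $C_\Phi>\tfrac12$ --- a point you correctly flag as essential rather than generic. What your route buys is elementarity and independence from the cited sources for the inequality itself; what it gives up is the stronger pointwise conclusion, which this paper happens never to use. Your handling of the chain of equalities ($\Psi(x_\Phi)=\wave\Psi(p_\Phi)$ via $p_\Phi=(x_\Phi^2+1)^{-1}$, and $\wave\Psi(p_\Phi)=\Delta_1(p_\Phi)$ via Theorem~\ref{ThUniDist(2point,Phi)n=1} with $p_\Phi>\tfrac12$) and of the strict-versus-nonstrict tail events is also correct.
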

\begin{Remark}
In~\cite[(2.32), (2.33)]{Agnew1957} it is proved that
$$
\abs{F(x) - \Phi(x)} \le \Psi(x), \quad x \in \R.
$$
In~\cite{ChebotarevKondrikMikhailov2007} it is proved that in the inequality
$$
\sup_{x \in \R}\abs{F(x) - \Phi(x)} \le \sup_{x \in \R}\Psi(x) = C_{\Phi}
$$
the equality is attained at a two-point distribution.
\end{Remark}

\section{Lower bounds for exact constants}
In the present section we construct lower bounds for the quantities
$$
\inf_{g \in \G} \CE(g,\eps, \gamma), \quad \inf_{g \in \G} \CR(g,\eps, \gamma),
$$
$$
\CE(\eps, \gamma) = \sup_{g \in \G} \CE(g,\eps, \gamma), \quad \CR(g,\eps, \gamma) = \sup_{g \in \G} \CR(\eps, \gamma)
$$
and
$$
 \essC(\eps,\gamma)=\CE(g_*,\eps, \gamma),\quad\rozC(\eps,\gamma)=\CR(g_*,\eps, \gamma)
$$
for all $\eps,\gamma>0.$ Recall that
$$
g_0(z) = \min\{z, B_n\}, \quad g_1(z) = \max\{z, B_n\},\quad g_*(z)=z,\quad g_{\textsc{c}}(z)=1,\quad z\ge0.
$$

By virtue of invariance of the fractions $\es(g,\eps,\gamma)$,  $\roz(g,\eps,\gamma)$ with respect to scale transform of $g$ and extreme property of $g_1$ (see~\eqref{gmin_gmax_ineq}) we have
$$
\inf_{g\in\G}\CE(g,\eps, \gamma)
=\inf_{g\in\G}\sup_{\substack{F_1,\ldots,F_n\in\mathcal{F}, \\n\in\N\colon B_n>0}} \frac{\Delta_n(F_1,\ldots,F_n)}{\es(g,\eps, \gamma)} 
=\inf_{g\in\G}\sup_{\substack{F_1,\ldots,F_n\in\mathcal{F}, \\n\in\N\colon B_n=1}} \frac{\Delta_n(F_1,\ldots,F_n)}{\es(g,\eps, \gamma)} 
$$
$$
\ge\sup_{\substack{F_1,\ldots,F_n\in\mathcal{F}, \\n\in\N\colon B_n=1}} \frac{\Delta_n(F_1,\ldots,F_n)}{\es(g_1,\eps, \gamma)} 
= \sup_{\substack{F_1,\ldots,F_n\in\mathcal{F}, \\n\in\N\colon B_n>0}} \frac{\Delta_n(F_1,\ldots,F_n)}{\es(g_1,\eps, \gamma)}  = \CE(g_1,\eps, \gamma)
$$
on one hand, and $\inf\limits_{g\in\G}\CE(g,\eps, \gamma)\le\CE(g_1,\eps, \gamma)$  by definition of the lower bound, on the other hand. Therefore
$$
\inf_{g\in\G}\CE(g,\eps, \gamma)=\CE(g_1,\eps, \gamma),\quad \inf_{g\in\G}\CR(g,\eps,\gamma)=\CR(g_1,\eps, \gamma),
$$
where the second equality is proved similar to the first one. 

Let us show that
$$
\CE(\eps, \gamma)= \CE(g_0,\eps, \gamma), \quad \CR(\eps, \gamma) = \CR(g_0,\eps, \gamma).
$$
Indeed, for arbitrary $n \in \N$, $F_1, \ldots, F_n\in\mathcal F$ and $g \in \G$ due to the extremality of~$g_0$ (see~\eqref{gmin_gmax_ineq}) we have
$$
\Delta_n \le \CE(g_0,\eps, \gamma)\es(g_0,\eps, \gamma)\le  \CE(g_0,\eps,\gamma)\es(g,\eps, \gamma),
$$
hence, $\CE(\eps,\gamma)\le\CE(g_0,\eps,\gamma)$, on one hand. On the other hand, this inequality may hold true only with the equality sign, since $\CE(\eps,\gamma)=\sup\limits_{g\in\G}\CE(g,\eps,\gamma)$ by definition. The same reasoning holds also true for~$\CR(\eps, \gamma)$.

Thus, $\CE(g_1,\eps,\gamma)$, $\CR(g_1,\eps,\gamma)$ are the most optimistic constants, while $\CE(g_0,\eps,\gamma)$, $\CR(g_0,\eps,\gamma)$ are the most pessimistic, but universal (exact) ones. The next theorem establishes lower bounds for the exact constants $\CE(\eps,\gamma)=\CE(g_0, \eps, \gamma)$, $\CE(\eps,\gamma)=\CR(g_0, \eps, \gamma)$, and also for the constants $\essC(\eps,\gamma)$ and $\rozC(\eps,\gamma)$ appearing in inequalities~$\eqref{EsseenTypeIneq2018}$ and  $\eqref{RozovskiiTypeIneq2018}$.

\begin{Theorem} \label{theorem_sup_abs_C}
{\rm (i)}
For all $\eps,\gamma>0$ we have
$$
\min\{
\CE(\eps,\gamma), \CR(\eps,\gamma), \essC(\eps,\gamma),\rozC(\eps,\gamma)\} 
\ge\frac{\Phi(1)-0.5}{\min\{1,\eps\}} 
>\frac{0.3413}{\min\{1,\eps\}}.
$$
{\rm (ii)} For $\frac{1}{2} < p < 1,$ $q=1-p$ set
$$
\Delta_1(p):=\Phi(\sqrt{q/p})-q.
$$
Take any $\gamma>0$. Let us denote for $\eps\le1:$
$$
K_{\scE,0}(p, \eps, \gamma) = 
\begin{cases}
\Delta_1(p) / \eps, & \frac{1}{2} < p \le \frac{1}{\eps^2 + 1}, 
\\
\Delta_1(p) \big/ \max\big\{\sqrt{q/p}, \gamma \sqrt{q^3/p} + \eps p\big\}, & \frac{1}{\eps^2 + 1} < p < 1,
\end{cases}
$$
$$
K_{\scR,0}(p, \eps, \gamma) = 
\begin{cases}
\Delta_1(p) / \eps, & \frac{1}{2} < p \le \frac{1}{\eps^2 + 1}, 
\\
\Delta_1(p) \big/ \big( \gamma \sqrt{q^3/p} + \max\big\{ \sqrt{q/p}, \eps p \big\} \big), & \frac{1}{\eps^2 + 1} < p < 1,
\end{cases}
$$
and for $\eps>1:$
$$
K_{\scE,0}(p, \eps, \gamma) = 
\Delta_1(p)/ 
\max\Big\{ \sqrt{{q}/{p}}, \gamma \sqrt{{q^3}/{p}} + p \Big\}, \quad \frac12<p<1, 
$$
$$
K_{\scE,*}(p, \eps, \gamma) = 
\begin{cases}
\Delta_1(p) / \max\Big\{ \sqrt{\frac{q}{p}}, \gamma \sqrt{\frac{q^3}{p}} + \eps p \Big\}, & \frac{\eps^2}{\eps^2+1}\le p<1, 
\\
\Delta_1(p)\sqrt{pq}\big/ \max\left\{q, \gamma q^2 + p^2,\gamma(p-q)\right\}, & \frac12<p<\frac{\eps^2}{\eps^2+1},
\end{cases}
$$
$$
K_{\scR,0}(p, \eps, \gamma) = 
\begin{cases}
\Delta_1(p) \Big/ \Big(\frac{\gamma}{\eps}\sqrt{\frac{q^3}{p}}
+\max\left\{ \sqrt{\frac{q}{p}}, p \right\}\Big), & \frac{\eps^2}{\eps^2+1}\le p<1, 
\\
\Delta_1(p) \Big/ \Big(\frac{\gamma(p-q)}{\eps\sqrt{pq}}
+\max\left\{ \sqrt{\frac{q}{p}}, p \right\}\Big), & \frac12<p<\frac{\eps^2}{\eps^2+1},
\end{cases}
$$
$$
K_{\scR,*}(p, \eps, \gamma) = 
\begin{cases}
\Delta_1(p) \Big/ \Big(\gamma\sqrt{\frac{q^3}{p}}+\max\left\{ \sqrt{\frac{q}{p}}, \eps p \right\} \Big), & \frac{\eps^2}{\eps^2+1}\le p<1, 
\\
\Delta_1(p) \Big/ \Big(\frac{\gamma(p-q)}{\sqrt{pq}}+ \max\Big\{ \sqrt{\frac{q}{p}}, \sqrt{\frac{p^3}{q}} \Big\}\Big), & \frac12<p<\frac{\eps^2}{\eps^2+1}.
\end{cases}
$$
Then for all $\eps,\gamma>0$ we have
$$
\CE(\eps, \gamma) \ge \sup_{\frac12 < p < 1} K_{\scE,0}(p, \eps, \gamma),\quad 
\CR(\eps, \gamma) \ge \sup_{\frac12 < p < 1} K_{\scR,0}(p, \eps, \gamma),
$$
and for $\eps>1$ also
$$
\essC(\eps,\gamma)\ge\sup_{\frac12<p<1} K_{\scE,*}(p, \eps, \gamma),\quad \rozC(\eps,\gamma)\ge\sup_{\frac12<p<1} K_{\scR,*}(p, \eps, \gamma).
$$
In particular, 
$$
\essC(\infty,1)\ge\sup_{\frac12<p<1} \frac{\big(\Phi(\sqrt{q/p}) - q\big)\sqrt{pq} }{q^2 + p^2}> 0.3703,
$$
and for the both constants $C_{\bullet}\in\{\CE,\CR\}$ with $\eps = 1, \gamma = 1$ the following lower bound holds:
$$
\min\{C_{\bullet}(1,1),\CE(\infty,1)\}\ge
\sup_{\frac12<p<1} \frac{\Phi(\sqrt{q/p}) - q}{\sqrt{{q^3}/{p}}+p} >0.5685\quad (p = 0.9058\ldots).
$$
\end{Theorem}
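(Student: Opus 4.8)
The plan is to exploit that each of the constants $\CE(\eps,\gamma)$, $\CR(\eps,\gamma)$, $\essC(\eps,\gamma)$, $\rozC(\eps,\gamma)$ is defined (see~\eqref{CE+CR(g,eps,gamma)defs}, \eqref{CE+CR(eps,gamma)defs}) as a supremum over all admissible $n$ and $F_1,\dots,F_n$, so that any single choice furnishes a lower bound. I would take the cheapest informative choice: $n=1$ with the two-point law~\eqref{two_point_dist}. Then $B_1=1$, the uniform distance equals $\Delta_1(p)$ by theorem~\ref{ThUniDist(2point,Phi)n=1}, and every fraction $\es(g,\eps,\gamma)$, $\roz(g,\eps,\gamma)$ at $n=1$ is given explicitly in theorem~\ref{ThFractionsFor2PointDistr}. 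Recalling the already established identities $\CE(\eps,\gamma)=\CE(g_0,\eps,\gamma)$, $\essC(\eps,\gamma)=\CE(g_*,\eps,\gamma)$ and their Rozovskii analogues, each asserted inequality reduces to $C_\bullet(\eps,\gamma)\ge\Delta_1(p)/L$, where $L$ is the value at $n=1$ of the matching fraction (the $g_0$-version for $\CE,\CR$, the $g_*$-version for $\essC,\rozC$); the supremum over $p\in(\tfrac12,1)$ then produces the displayed bounds.

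For part~(i) I would use the symmetric law $p=q=\tfrac12$. By theorem~\ref{ThFractionsFor2PointDistr} all four relevant fractions collapse at $n=1$ to the common value $\min\{\eps,1/\sqrt n\}=\min\{1,\eps\}$, while theorem~\ref{ThUniDist(2point,Phi)n=1} gives $\Delta_1(\tfrac12)=\Phi(1)-\tfrac12$. Dividing yields $(\Phi(1)-0.5)/\min\{1,\eps\}>0.3413/\min\{1,\eps\}$ simultaneously for $\CE,\CR,\essC,\rozC$, which is exactly the claim.

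Part~(ii) is then pure bookkeeping: one matches the piecewise formulas of theorem~\ref{ThFractionsFor2PointDistr} at $n=1$ with the definitions of $K_{\scE,0},K_{\scR,0},K_{\scE,*},K_{\scR,*}$. The one genuine point is the translation of the case conditions $n\eps^2\lessgtr q/p,\ p/q$ (with $n=1$) into thresholds on $p$: $\eps^2\le q/p$ becomes $p\le\frac{1}{\eps^2+1}$ and $\eps^2\le p/q$ becomes $p\ge\frac{\eps^2}{\eps^2+1}$. For $p>\tfrac12$ one checks that when $\eps\le1$ only the first two regimes of theorem~\ref{ThFractionsFor2PointDistr} occur (since $\frac{\eps^2}{\eps^2+1}\le\tfrac12$), whereas for $\eps>1$ one invokes $\es(g_0,\eps,\gamma)=\es(g_*,1,\gamma)$ for the $g_0$-constants and the third-regime expressions for $\es(g_*,\eps,\gamma)$, $\roz(g_*,\eps,\gamma)$ in the case of $\essC,\rozC$. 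This reproduces the four supremum bounds verbatim.

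It remains to derive the numerical corollaries, which require simplifying the inner maxima and then optimizing in $p$. With $\gamma=1$ the elementary identities $q^2+p^2-q=p(p-q)\ge0$ and $q^2+p^2-(p-q)=2q^2\ge0$ give $\max\{q,q^2+p^2,p-q\}=q^2+p^2$, turning the $\eps\to\infty$ limit of $K_{\scE,*}(p,\eps,1)$ into $(\Phi(\sqrt{q/p})-q)\sqrt{pq}/(q^2+p^2)$; likewise $\sqrt{q/p}-\sqrt{q^3/p}=p\sqrt{q/p}\le p$ for $p\ge\tfrac12$ gives $\max\{\sqrt{q/p},\sqrt{q^3/p}+p\}=\sqrt{q^3/p}+p$, and $\max\{\sqrt{q/p},p\}=p$ near the optimum, which collapses $K_{\scE,0}(p,1,1)$, $K_{\scE,0}(p,\infty,1)$ and $K_{\scR,0}(p,1,1)$ to the single expression $(\Phi(\sqrt{q/p})-q)/(\sqrt{q^3/p}+p)$. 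I would not seek the maximizers in closed form; it suffices to substitute the announced points (the optimizer giving $0.3703$, and $p=0.9058\ldots$ for $0.5685$) and verify the inequalities numerically, using one derivative in $p$ only to confirm that these are interior maxima. The main obstacle is thus not conceptual but the careful branch-tracking: one must ensure that at and near each claimed optimal $p$ the correct piece of every fraction and the correct term of every inner maximum are selected, so that the clean closed forms are genuinely the quantities being maximized.
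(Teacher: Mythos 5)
Your proposal is correct and follows essentially the same route as the paper: both prove the theorem by taking $n=1$ with the two-point law~\eqref{two_point_dist}, invoking theorem~\ref{ThUniDist(2point,Phi)n=1} for $\Delta_1(p)$ and theorem~\ref{ThFractionsFor2PointDistr} for the fractions, using $p=\tfrac12$ for part~(i), translating the conditions $\eps^2\lessgtr q/p,\ p/q$ into the thresholds $\frac{1}{\eps^2+1}$, $\frac{\eps^2}{\eps^2+1}$ for part~(ii), and then simplifying the inner maxima (exactly as you do, e.g.\ $\max\{q,q^2+p^2,p-q\}=q^2+p^2$) before evaluating at the announced extremal $p$ for the numerical corollaries.
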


\begin{proof}
Let $n=1$ and the r.v. $X_1$ have the two-point distribution~\eqref{two_point_dist} with ${p\in[\frac12,1)}$. Then, by theorem~\ref{ThUniDist(2point,Phi)n=1}, we have $\Delta_1(F_1)=\Delta_1(p)$, and all the constants $C_{\bullet}\in\{\CE,\CE\}$, $A_{\bullet}\in\{\essC,\rozC\}$ can be bounded from below as
$$
C_{\bullet}(\eps,\gamma)\ge \sup_{\frac12\le p<1} \frac{\Delta_1(p)}{L_{\bullet,1}(g_0,\eps,\gamma)},\quad 
A_{\bullet}(\eps,\gamma)\ge \sup_{\frac12\le p<1} \frac{\Delta_1(p)}{L_{\bullet,1}(g_*,\eps,\gamma)}.
$$
In particular, for $p=\frac12$ we have $\Delta_1(\tfrac12)=\Phi(1)-0.5=0.3413\ldots,$ and by theorem~\ref{ThFractionsFor2PointDistr}
$$
\es(g_0,\eps, \gamma)= \roz(g_0,\eps, \gamma)= \es(g_*,\eps, \gamma)= \roz(g_*,\eps, \gamma)=\min\{1,\eps\},\quad \eps,\gamma>0,
$$
whence the statement of point (i) follows immediately. 

To prove point (ii) it suffices to make sure that for all $p\in(\frac12,1)$ and $\eps,\gamma>0$
$$
K_{\bullet,0}(p,\eps,\gamma)= \frac{\Delta_1(p)}{L_{\bullet,1}(g_0,\eps,\gamma)}, \quad 
K_{\bullet,*}(p,\eps,\gamma)= \frac{\Delta_1(p)}{L_{\bullet,1}(g_*,\eps,\gamma)},\quad \bullet\in\{\scE,\scR\}.
$$
Theorem~\ref{ThFractionsFor2PointDistr} (i) with $n=1$ implies that for $p>\frac12$, $\eps,\gamma>0$
$$
L_{\scE,1}(g_*,\eps,\gamma)
=\left\{\begin{array}{ll}
\eps, & \eps^2\le\frac{q}{p},
\\
\max\left\{ \sqrt{\frac{q}{p}}, \gamma \sqrt{\frac{q^3}{p}} + \eps p \right\}, & {\frac{q}{p}}<\eps^2\le\frac{p}{q},
\\
\max\left\{q, \gamma q^2 + p^2,\gamma(p-q)\right\}/\sqrt{pq}, & \eps^2>\frac{p}{q}.
\end{array}\right.
$$
$$
L_{\scR,1}(g_*,\eps,\gamma)
=\begin{cases}
\eps, & \eps^2\le\frac{q}{p}, 
\\
\gamma\sqrt{\frac{q^3}{p}}+\max\left\{ \sqrt{\frac{q}{p}}, \eps p \right\}, & \frac{q}{p}<\eps^2\le\frac{p}{q}, 
\\
\frac{\gamma(p-q)}{\sqrt{pq}}+ \max\Big\{ \sqrt{\frac{q}{p}}, \sqrt{\frac{p^3}{q}} \Big\}, & \eps^2>\frac{p}{q},
\end{cases}
$$
$L_{\scE,1}(g_0,\eps,\gamma)=L_{\scE,1}(g_*,\eps\wedge1,\gamma)$, while for $\eps>1$, by the same theorem~\ref{ThFractionsFor2PointDistr} (iii), we have
$$
L_{\scR,1}(g_0,\eps,\gamma)
=\begin{cases}
\frac{\gamma}{\eps}\sqrt{\frac{q^3}{p}}
+\max\left\{ \sqrt{\frac{q}{p}}, p \right\}
,& 1<\eps^2\le\frac{p}{q}, 
\\
\frac{\gamma(p-q)}{\eps\sqrt{pq}}
+\max\left\{ \sqrt{\frac{q}{p}}, p \right\}
,& 1\le\frac{p}{q}<\eps^2, 
\end{cases}
$$
(recall  that $\roz(g_0,\eps,\gamma)=\roz(g_*,\eps,\gamma)$ for $\eps\le1$). Note that for all $\eps>0$ and $p\in(\frac12,1)$
\begin{equation}\label{eps^,q,p:3inequalities}
\begin{array}{lclcl}
\eps^2\le\frac{q}{p}
&\Leftrightarrow& 
p\in\big(\frac12,\frac1{\eps^2+1}\big]
&=&\emptyset,\ \eps>1, 
\\
{\frac{q}{p}}<\eps^2\le\frac{p}{q}
&\Leftrightarrow&  
\begin{cases}
p>\frac{1}{\eps^2+1},
\\
p\ge\frac{\eps^2}{\eps^2+1},
\end{cases}
&\Leftrightarrow& 
\begin{cases}
p\ge\frac{\eps^2}{\eps^2+1},\quad \eps>1,
\\
p>\frac{1}{\eps^2+1},\quad\eps\le1,
\end{cases}
\\
\eps^2>\frac{p}{q}
&\Leftrightarrow&
p\in\big(\frac12,\frac{\eps^2}{\eps^2+1}\big)
&=&\emptyset,\  \eps\le1,
\end{array}
\end{equation}
so that from the three conditions in~\eqref{eps^,q,p:3inequalities} under the additional condition $p\in(\frac12,1)$ there remain only two:
$$
\begin{array}{l}
\frac12<p\le\frac1{\eps^2+1}
\\
\frac{1}{\eps^2+1}<p<1
\end{array}
\quad\text{for }\eps\le1,
\qquad 
\begin{array}{l}
\frac{\eps^2}{\eps^2+1}\le p<1
\\
\frac12<p<\frac{\eps^2}{\eps^2+1}
\end{array}
\quad\text{for }\eps>1.\qquad 
$$
In particular, for $\eps\le1$
$$
L_{\scE,1}(g_0,\eps,\gamma)=L_{\scE,1}(g_*,\eps,\gamma)
=\begin{cases}
\eps,& \frac12<p\le\frac1{\eps^2+1}, 
\\
\max\Big\{ \sqrt{\frac{q}{p}}, \gamma \sqrt{\frac{q^3}{p}} + \eps p \Big\}, &  \frac{1}{\eps^2+1}<p<1, 
\end{cases}
$$
$$
L_{\scR,1}(g_0,\eps,\gamma)=L_{\scR,1}(g_*,\eps,\gamma)
=\begin{cases}
\eps, &  \frac12<p\le\frac1{\eps^2+1},
\\
\gamma\sqrt{\frac{q^3}{p}}+\max\left\{ \sqrt{\frac{q}{p}}, \eps p \right\}, & \frac{1}{\eps^2+1}<p<1, 
\end{cases}
$$
and for $\eps>1$
$$
L_{\scE,1}(g_*,\eps,\gamma)
=\left\{\begin{array}{ll}
\max\Big\{ \sqrt{\frac{q}{p}}, \gamma \sqrt{\frac{q^3}{p}} + \eps p \Big\}, & \frac{\eps^2}{\eps^2+1}\le p<1, 
\\
\max\left\{q, \gamma q^2 + p^2,\gamma(p-q)\right\}/\sqrt{pq}, &  \frac12<p\le\frac{\eps^2}{\eps^2+1}.
\end{array}\right.
$$
$$
L_{\scE,1}(g_0,\eps,\gamma)=L_{\scE,1}(g_*,1,\gamma)=
\max\Big\{ \sqrt{\tfrac{q}{p}}, \gamma \sqrt{\tfrac{q^3}{p}} + p \Big\},\quad \tfrac12<p<1,
$$
$$
L_{\scR,1}(g_*,\eps,\gamma)
=\begin{cases}
\gamma\sqrt{\frac{q^3}{p}}+\max\left\{ \sqrt{\frac{q}{p}}, \eps p \right\}, &  \frac{\eps^2}{\eps^2+1}\le p<1, 
\\
\frac{\gamma(p-q)}{\sqrt{pq}}+ \max\Big\{ \sqrt{\frac{q}{p}}, \sqrt{\frac{p^3}{q}} \Big\}, &  \frac12<p<\frac{\eps^2}{\eps^2+1},
\end{cases}
$$
$$
L_{\scR,1}(g_0,\eps,\gamma)
=\begin{cases}
\frac{\gamma}{\eps}\sqrt{\frac{q^3}{p}}
+\max\left\{ \sqrt{\frac{q}{p}}, p \right\}
,& \frac{\eps^2}{\eps^2+1}\le p<1, 
\\
\frac{\gamma(p-q)}{\eps\sqrt{pq}}
+\max\left\{ \sqrt{\frac{q}{p}}, p \right\}
,& \frac12<p<\frac{\eps^2}{\eps^2+1},
\end{cases}
$$
whence we obtain the above expressions for $K_{\bullet,0}$, $K_{\bullet,*}$.

Now let us consider the particular cases. The coinciding lower bounds for $\CE(\infty,1)$, $\CE(1,1)$ and $\CR(1,1)$ follow from that  $L_{\scE,1}(g_0,\infty,1)=\sqrt{{q^3}/{p}} + p$ for all $p\in(\frac12,1)$, and also from, as it can easily be made sure,
$$
L_{\scE,1}(g_0,1,1)=L_{\scE,1}(g_0,\infty,1)=\sqrt{q^3/p}+p=L_{\scR,1}(g_0,1,1)
$$
for $p\ge p_0$, where $p_0 = 0.6823\ldots$ is the unique root of the equation  $p^3 + p - 1 = 0$ on the segment $[0.5,1]$.  The lower bound for $\essC(\infty,1)$ follows from that $L_{\scE,1}(g_*,\infty,1)=({q^2+p^2})/{\sqrt{pq}}$.
\end{proof}

Now let us find lower bouds for the most optimistic constants $\CE(g_1,\eps, \gamma)$ and~$\CR(g_1,\eps, \gamma)$.

\begin{Theorem} \label{theorem_inf_abs_C}
{\rm (i)} For all $\eps,\gamma>0$ we have
$$
\min\{\CE(g_1, \eps, \gamma), \CR(g_1, \eps, \gamma)\}\ge
\Phi(1)-0.5
>0.3413.
$$
{\rm (ii)} For $\frac{1}{2} < p < 1,$ $q=1-p$ set
$$
\Delta_1(p):=\Phi(\sqrt{q/p})-q.
$$
Take any $\gamma>0$. Let us denote for $\eps\le1:$
$$
K_{\scE,1}(p, \eps, \gamma) = 
\begin{cases}
\Delta_1(p), &\frac{1}{2} < p \le \frac{1}{\eps^2 + 1}, 
\\
\Delta_1(p) / \max\{1, \gamma q + p\}, &\frac{1}{\eps^2 + 1} < p < 1,
\end{cases}
$$
$$
K_{\scR,1}(p, \eps, \gamma) = \begin{cases}
\Delta_1(p), &\frac{1}{2} < p \le \frac{1}{\eps^2 + 1}, 
\\
\Delta_1(p) / \Big( 1 + \frac{\gamma}{\eps}\sqrt{\frac{q^3}{p}}\,\Big), & \frac{1}{\eps^2 + 1} < p < 1,
\end{cases}
$$
and for $\eps>1:$
$$
K_{\scE,1}(p, \eps, \gamma) = 
\begin{cases}
\Delta_1(p)/\max\Big\{1, \gamma q + p, \frac{\gamma q^2+p^2}{\sqrt{pq}}, \frac{\gamma(p - q)}{\sqrt{pq}}\Big\}, &\frac{1}{2} <p<\frac{\eps^2}{\eps^2+1}, 
\\
\Delta_1(p) / \max\big\{1, \gamma q + p, \frac{\gamma q^{3/2}}{\sqrt{p}}+p\eps \big\}, &\frac{\eps^2}{\eps^2+1}\le p < 1,
\end{cases}
$$
$$
K_{\scR,1}(p, \eps, \gamma) = \begin{cases}
\Delta_1(p)/\big(\frac{\gamma(p-q)}{\sqrt{pq}}+\max\big\{ \sqrt{\frac{p^3}{q}},1\big\}\big),  &\frac{1}{2} <p<\frac{\eps^2}{\eps^2+1}, 
\\
\Delta_1(p) /\big(\gamma\sqrt{\frac{q^3}{p}}+\max\left\{ \eps p,1\right\}\big),  &\frac{\eps^2}{\eps^2+1}\le p < 1.
\end{cases}
$$
Then for all $\eps,\gamma>0$ we have
$$
\CE(g_1,\eps, \gamma) \ge \sup_{\frac12 < p < 1}  K_{\scE,1}(p, \eps, \gamma), \quad \CR(g_1,\eps, \gamma) \ge \sup_{\frac12 < p < 1} K_{\scR,1}(p, \eps, \gamma).
$$
In particular, 
$$
\CE(g_1,\eps, \gamma) \ge \sup_{\frac12 < p < 1} \Delta_1(p)=\Delta_1(p_{\Phi}) =: C_{\Phi}=0.5409\ldots
$$
for $\gamma \le 1$ or $\eps \le x_{\Phi},$ where $x_{\Phi}= 0.213105\ldots$ is defined in lemma~$\ref{ChebotarevPsi}$,  $p_{\Phi} = (x_{\Phi}^2 + 1)^{-1} = 0.9565\ldots,$
$$
\CE(g_1,\infty,1)\ge\sup_{\frac12<p<1} \frac{\big(\Phi(\sqrt{q/p}) - q\big)\sqrt{pq} }{q^2 + p^2}> 0.3703,
$$
$$
\CR(g_1,\eps, \gamma) \ge C_{\Phi}
\quad \text{for}\quad \eps \le x_{\Phi},
$$
$$
\CR(g_1,1, 1) \ge \sup_{\frac12 < p < 1} K_{\scR,1}(p, 1, 1) = \frac{\Delta_1(p)}{1 + \sqrt{(1-p)^3/p}}\bigg|_{p = 0.9678\ldots}> 0.5370.
$$
\end{Theorem}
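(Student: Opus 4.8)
The plan is to imitate the proof of Theorem~\ref{theorem_sup_abs_C}, now restricting the suprema defining $\CE(g_1,\eps,\gamma)$ and $\CR(g_1,\eps,\gamma)$ to the single-summand two-point family~\eqref{two_point_dist}. Taking $n=1$ and $p\in[\tfrac12,1)$, Theorem~\ref{ThUniDist(2point,Phi)n=1} gives $\Delta_1(F_1)=\Delta_1(p)$, so that by the very definition~\eqref{CE+CR(g,eps,gamma)defs} one has $\CE(g_1,\eps,\gamma)\ge\sup_{\frac12\le p<1}\Delta_1(p)/L_{\scE,1}(g_1,\eps,\gamma)$ and likewise for $\CR$ with $L_{\scR,1}$. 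For part~(i) I would put $p=\tfrac12$: by Theorem~\ref{ThFractionsFor2PointDistr} both $L_{\scE,1}(g_1,\eps,\gamma)$ and $L_{\scR,1}(g_1,\eps,\gamma)$ equal $1$ at $p=\tfrac12$ for every $\eps,\gamma>0$, while $\Delta_1(\tfrac12)=\Phi(1)-0.5$, which yields the asserted bound at once.

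For part~(ii) the task reduces to identifying $K_{\scE,1}(p,\eps,\gamma)$ and $K_{\scR,1}(p,\eps,\gamma)$ with the ratios $\Delta_1(p)/L_{\scE,1}(g_1,\eps,\gamma)$ and $\Delta_1(p)/L_{\scR,1}(g_1,\eps,\gamma)$ on $p\in(\tfrac12,1)$. I would substitute $n=1$ into Theorem~\ref{ThFractionsFor2PointDistr}: for $\eps\le1$ one uses $L_{\bullet,1}(g_1,\eps,\cdot)=L_{\bullet,1}(g_{\textsc{c}},\eps,\cdot)$ together with~\eqref{Esseen_g1_two_point} and~\eqref{Rozovsky_g1_two_point}, while for $\eps>1$ one invokes parts~(ii) and~(iii) of that theorem. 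The key simplification is the elementary case analysis already recorded in~\eqref{eps^,q,p:3inequalities}: under the standing restriction $p\in(\tfrac12,1)$ the branch $n\eps^2\le q/p$ is empty when $\eps>1$ and the branch $n>p/q$ (i.e.\ $p<\tfrac12$) never occurs, so for each sign of $\eps-1$ only two $p$-ranges survive, split at $p=\tfrac1{\eps^2+1}$ when $\eps\le1$ and at $p=\tfrac{\eps^2}{\eps^2+1}$ when $\eps>1$. Reading off the matching branches reproduces exactly the piecewise definitions of $K_{\scE,1}$ and $K_{\scR,1}$, after which the two displayed inequalities follow by taking the supremum over~$p$.

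The particular cases are then obtained by evaluating or simplifying these suprema. For $\CE(g_1,\eps,\gamma)\ge C_{\Phi}$ I would test the maximizer $p=p_{\Phi}=(x_{\Phi}^2+1)^{-1}$ of $\Delta_1(p)$ (supplied by Lemmas~\ref{ChebotarevPsi} and~\ref{ChebotarevBound}) and verify that the denominator collapses to~$1$ there: when $\eps\le x_{\Phi}$ the truncation never reaches the far atom, so $L_{\scE,1}(g_1,\eps,\gamma)\equiv1$, and when $\gamma\le1$ the surviving term $\gamma q+p\le1$ keeps the relevant maximum at~$1$; in either case $K_{\scE,1}(p_{\Phi},\eps,\gamma)=\Delta_1(p_{\Phi})=C_{\Phi}$. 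For $\CE(g_1,\infty,1)$ I would show that among the four competitors in the $\eps>1$ denominator the term $(q^2+p^2)/\sqrt{pq}$ dominates on all of $(\tfrac12,1)$: indeed $q^2+p^2-(p-q)=2q^2>0$ handles the third term, while $q^2+p^2=1-2pq\ge\tfrac12\ge\sqrt{pq}$ handles the first two. This turns $K_{\scE,1}(p,\infty,1)$ into $\Delta_1(p)\sqrt{pq}/(q^2+p^2)$ and gives the numerical bound $>0.3703$. The Rozovskii statements follow the same template, with $\CR(g_1,\eps,\gamma)\ge C_{\Phi}$ for $\eps\le x_{\Phi}$ obtained exactly as above, and $\CR(g_1,1,1)$ coming from the middle branch of~\eqref{Rozovsky_g1_two_point}, where the denominator equals $1+\sqrt{(1-p)^3/p}$ and a one-dimensional numerical maximization over $p$ yields the value $>0.5370$ at $p=0.9678\ldots$.

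I expect the main obstacle to be bookkeeping rather than conceptual: one must keep the several piecewise branches of Theorem~\ref{ThFractionsFor2PointDistr} aligned with the correct $p$-intervals and, in each particular case, certify which summand attains the maximum in the denominator. The most delicate point is precisely locating the regime in which $L_{\scE,1}(g_1,\eps,\gamma)$ (respectively $L_{\scR,1}$) equals~$1$ at $p_{\Phi}$, since that is exactly what the hypotheses are designed to guarantee; once the far atom of the $p_{\Phi}$-distribution starts contributing through the $\eps p$ term the denominator exceeds~$1$ and the clean value $C_{\Phi}$ is no longer delivered by this construction, which is why the weaker $>0.3703$ bound is recorded separately for $\eps=\infty$.
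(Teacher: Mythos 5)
Your proposal follows essentially the same route as the paper's own proof: take $n=1$ with the two-point law~\eqref{two_point_dist}, combine Theorem~\ref{ThUniDist(2point,Phi)n=1} for the numerator with the $n=1$ specializations of Theorem~\ref{ThFractionsFor2PointDistr} for $L_{\bullet,1}(g_1,\eps,\gamma)$, and read off the branches via the case split~\eqref{eps^,q,p:3inequalities}; the particular cases are handled the same way (the paper merely labels the $\CE(g_1,\infty,1)$ and $\CR(g_1,1,1)$ evaluations ``trivial,'' whereas you spell out the domination of $(q^2+p^2)/\sqrt{pq}$, which is a correct elaboration, not a deviation). Note only that, like the paper's own argument, your justification of the $C_\Phi$ bound in the case $\gamma\le1$ really covers $\eps\le1$ (for $\eps>1$ the term $\gamma q^{3/2}/\sqrt{p}+p\eps$ can exceed $1$ at $p_\Phi$), so this shared limitation is not a defect of your write-up relative to the paper.
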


\begin{proof} 
Let $n=1$ and the r.v. $X_1$ have the two-point distribution~\eqref{two_point_dist} with ${p\in[\frac12,1)}$. Then, by theorem~\ref{ThUniDist(2point,Phi)n=1} we have $\Delta_1(F_1)=\Delta_1(p)$, and all the constants $C_{\bullet}\in\{\CE,\CE\}$ are bounded from below as
$$
C_{\bullet}(g_1,\eps,\gamma)\ge \sup_{\frac12\le p<1} \frac{\Delta_1(p)}{L_{\bullet,1}(g_1,\eps,\gamma)}.
$$
In particular, for $p=\frac12$ we have $\Delta_1(\tfrac12)=\Phi(1)-0.5=0.3413\ldots,$ while by theorem~\ref{ThFractionsFor2PointDistr}
$$
\es(g_1,\eps, \gamma)= \roz(g_1,\eps, \gamma)\equiv1,\quad \eps,\gamma>0,
$$
whence the statement of point (i) follows immediately. 

To prove point  (ii), it suffices to make sure that for all $p\in(\frac12,1)$, $\eps,\gamma>0$ we have
$$
K_{\bullet,1}(p,\eps,\gamma)= \frac{\Delta_1(p)}{L_{\bullet,1}(g_1,\eps,\gamma)}, \quad 
\bullet\in\{\scE,\scR\}.
$$
Theorem~\ref{ThFractionsFor2PointDistr} (i) (see~\eqref{Esseen_g1_two_point} and~\eqref{Rozovsky_g1_two_point}) with $n=1$ implies that for $p>\frac12$, $\gamma>0$ and $\eps\le1$ we have
$$
L_{\scE,1}(g_1,\eps,\gamma)=L_{\scE,1}(g_{\textsc{c}},\eps,\gamma)
= \begin{cases}
1, & \eps^2\le\frac{q}{p}\quad\Leftrightarrow\quad p\le\frac{1}{\eps^2+1},
\\
\max\left\{ 1, \gamma q + p \right\}, & \eps^2>\frac{q}{p}\quad\Leftrightarrow\quad p>\frac{1}{\eps^2+1},
\end{cases}
\ =\ \frac{\Delta_1(p)}{K_{\scE,1}(p, \eps, \gamma)},
$$
$$
L_{\scR,1}(g_1,\eps,\gamma)=L_{\scR,1}(g_{\textsc{c}},\eps,\gamma) =
\begin{cases}
1, & \eps^2\le\frac{q}{p},
\\
\frac{\gamma}{\eps}\sqrt{\frac{q^3}{p}}+1, & \eps^2>\frac{q}{p}, 
\end{cases}
\ =\ \frac{\Delta_1(p)}{K_{\scR,1}(p, \eps, \gamma)},
$$
while for $\eps>1$, by theorem~\ref{ThFractionsFor2PointDistr} (ii) and (iii), respectively, we have
$$
L_{\scE,1}(g_1,\eps,\gamma) = 
\begin{cases}
\max\big\{1, \gamma q + p, \frac{\gamma q^{3/2}}{\sqrt{p}}+p\eps \big\}, & \eps^2\le \frac pq\quad\Leftrightarrow\quad p\ge\frac{\eps^2}{\eps^2+1},
\\
\max\big\{1, \gamma q + p, \frac{\gamma q^2+p^2}{\sqrt{pq}}, \frac{\gamma(p - q)}{\sqrt{pq}}\big\}, & \eps^2>\frac pq\quad\Leftrightarrow\quad p<\frac{\eps^2}{\eps^2+1},
\end{cases}
\ =\ \frac{\Delta_1(p)}{K_{\scE,1}(p, \eps, \gamma)},
$$
$$
L_{\scR,1}(g_1,\eps,\gamma) =
\begin{cases}
\gamma\sqrt{\frac{q^3}{p}}+\max\left\{ \eps p,1\right\},
&\eps^2\le\frac{p}{q},
\\
\frac{\gamma(p-q)}{\sqrt{pq}}+\max\left\{ \sqrt{\frac{p^3}{q}},1\right\},
& \eps^2>\frac{p}{q},
\end{cases}
\ =\ \frac{\Delta_1(p)}{K_{\scR,1}(p, \eps, \gamma)},
$$
which coincides with the statement of point (ii).

Now let us consider the particular cases. The lower bound for $\CE(g_1,\eps,\gamma)$ with $\gamma\le1$ follows from that, for the specified $\gamma$ and $\eps\le1$, we have $K_{\scE,1}(p, \eps, \gamma) = \Delta_1(p)$, $\frac12<p\le1$. For $\eps\le x_{\Phi}$ we have $\frac1{\eps^2+1}\ge\frac1{x_{\Phi}^2+1}=p_{\Phi}$ and, hence,
$$
\CE(g_1,\eps, \gamma)\ge 
\sup_{\frac12 < p\le\frac1{\eps^2+1}} K_{\scE,1}(p, \eps, \gamma) = \sup_{\frac12 < p\le\frac1{\eps^2+1}}\Delta_1(p)=\Delta_1(p_{\Phi}) = C_{\Phi}.
$$
The lower bound for $\CR(g_1,\eps, \gamma)\ge C_{\Phi}$ with $\eps\le x_{\Phi}$ is obtained similarly. The given values of the constants $\CE(g_1,\infty,1)$, $\CR(g_1,1, 1) $ are computed trivially.
\end{proof}

\section{Lower bounds for he asymptotically best constants}

Let us investigate the constants $\abe(g, \eps, \gamma)$ in~\eqref{AW_ineq} and~\eqref{Rozovsky_ineq} and construct their lower bounds.
Due to the extremality of the functions $g_0(z)=\min\{z,B_n\}$ and $g_1(z)=\max\{z,B_n\}$ (see~\eqref{gmin_gmax_ineq}) we have
$$
\sup_{g \in \G} \abe(g, \eps, \gamma) = \abe(g_0, \eps, \gamma), \quad \inf_{g \in \G} \abe(g, \eps, \gamma) = \abe(g_1, \eps, \gamma).
$$
in both inequalities~\eqref{AW_ineq} and \eqref{Rozovsky_ineq}.

\begin{Theorem} \label{theorem_AB_lb}
For all $\eps,\ \gamma > 0:$
\\
{\rm (i)} in inequality~\eqref{AW_ineq}
\begin{equation}\label{ABEEsseenLowBound}
\inf_{\eps>0}\abe(g_0, \eps, \gamma) \ge \frac1{3\sqrt{2\pi}}\sup_{\frac12 < p < 1} \frac{p+1}{\max\{ 1-p, \gamma (1-p)^2 + p^2, \gamma(2p - 1) \}}\eqqcolon \underline{\abe}(\gamma) ,
\end{equation}
in particular, $\inf\limits_{\eps>0}\abe(g_0, \eps, 1) \ge \frac{\sqrt{10} + 3}{6\sqrt{2\pi}} = 0.4097\ldots;$
\\
{\rm (ii)} in inequality \eqref{Rozovsky_ineq}
\begin{equation}\label{ABERozLowBound}
\abe(g_0, \eps, \gamma) \ge
\begin{cases}
\displaystyle
\frac{1 + \sqrt{5}}{3\sqrt{2\pi} \big(2\gamma(\eps^{-1}\wedge1)(\sqrt{5} - 2) + 3 - \sqrt{5}\big)}, &\displaystyle
\frac\gamma{\eps\vee1} <\frac{2}{3}, 
\\[3mm]
\displaystyle\frac{1}{\sqrt{2\pi}}=0.3989\ldots,
&\displaystyle
\frac\gamma{\eps\vee1} \ge \frac{2}{3};
\end{cases}
\end{equation}
\\
{\rm (iii)} in both inequalities \eqref{AW_ineq} and~\eqref{Rozovsky_ineq}
$$
\abe(g_1, \eps, \gamma) = 0.
$$
\end{Theorem}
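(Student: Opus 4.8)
The plan is to prove the three lower bounds in Theorem~\ref{theorem_AB_lb} by selecting, for each case, a specific i.i.d. sequence whose limiting behavior can be computed explicitly, and then evaluating the relevant fraction $L_{\bullet,n}(g_0,\eps,\gamma)$ together with the limiting value of $\Delta_n$. Since $\abe(g_0,\eps,\gamma)=\sup_{F}\limsup_{n\to\infty}\Delta_n(F)/L_n(g_0,\eps,\gamma)$, every fixed admissible $F$ produces a valid lower bound, so the task reduces to a good choice of $F$ plus asymptotic estimates.

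For parts (i) and (ii) I would again use the two-point distribution~\eqref{two_point_dist} with $p\in(\tfrac12,1)$, which is the same family that drove Theorems~\ref{theorem_sup_abs_C} and~\ref{theorem_inf_abs_C}. The numerator requires the asymptotics of $\Delta_n$ for a fixed two-point law as $n\to\infty$. Here the summands are lattice-valued, so the classical Esseen asymptotic expansion applies: $\Delta_n\sim \frac{C_1(F)}{\sqrt n}$ with a leading coefficient combining the Edgeworth term (controlled by the third cumulant, i.e. $\essM$-type quantity $\mu_1=(p-q)/\sqrt{pq}$) and the lattice-span contribution. The appearance of the prefactor $\frac{1}{3\sqrt{2\pi}}$ and the structure $\frac{p+1}{\max\{\ldots\}}$ strongly suggests that the dominant term comes from the jump of the lattice distribution of span $\sqrt{q/p}+\sqrt{p/q}=1/\sqrt{pq}$ rescaled by $B_n=\sqrt n$, giving a leading constant proportional to the span times $\varphi(0)=1/\sqrt{2\pi}$; the factor $\tfrac13$ and $(p+1)$ should emerge from carefully combining the lattice-jump term with the skewness correction. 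For the denominator I would invoke Theorem~\ref{ThFractionsFor2PointDistr}: as $n\to\infty$ with $p>\tfrac12$ fixed we land in the regime $n\eps^2>p/q$, so $L_{\scE,n}(g_0,\eps,\gamma)=L_{\scE,n}(g_*,\eps\wedge1,\gamma)$ and, by~\eqref{Esseen_g0_two_point}, equals $\max\{q,\gamma q^2+p^2,\gamma(p-q)\}/\sqrt{npq}$, matching exactly the $\max$-expression in~\eqref{ABEEsseenLowBound} after writing $q=1-p$. The $\sqrt n$ cancels between numerator and denominator, leaving the stated supremum over $p$. The specialization $\gamma=1$ should reduce the $\max$ to $\gamma q^2+p^2=q^2+p^2$ in the optimal range and the supremum is attained where $p^3+p-1=0$ type algebra yields $(\sqrt{10}+3)/(6\sqrt{2\pi})$.

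Part (ii) follows the same template but with $L_{\scR,n}$: by Theorem~\ref{ThFractionsFor2PointDistr}, for large $n$ we have $\roz(g_0,\eps,\gamma)=\frac{\gamma(p-q)}{(\eps\vee1)\sqrt{npq}}+\max\{\sqrt{q/np},\sqrt{p^3/nq}\}$, and since $p>\tfrac12$ the $\max$ resolves to $\sqrt{p^3/nq}$; after extracting $1/\sqrt n$ the denominator becomes $\bigl(\gamma(\eps\vee1)^{-1}(2p-1)+p^{3/2}q^{-1/2}\bigr)/\sqrt{pq}$. Matching this against the same leading $\Delta_n$-asymptotics and then optimizing over $p$ should produce the two-branch formula in~\eqref{ABERozLowBound}, with the threshold $\gamma/(\eps\vee1)=2/3$ marking where the interior maximum in $p$ crosses the boundary value $p\to\tfrac12$; the value $\tfrac{1}{\sqrt{2\pi}}$ in the second branch is precisely the $p\to\tfrac12$ limit where skewness vanishes and only the lattice jump survives.

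Part (iii) is genuinely different and, I expect, the easiest once framed correctly: I would show $\abe(g_1,\eps,\gamma)=0$ by proving that the denominator $L_{\bullet,n}(g_1,\eps,\gamma)$ stays bounded away from zero while $\Delta_n\to0$ for every fixed $F$ in the i.i.d. triangular-array sense. Indeed,~\eqref{g_1_Esseen} and~\eqref{g_1_Rozovsky} give $L_{\bullet,n}(g_1,\eps,\gamma)\ge1$ for all $\eps,\gamma>0$ and all $n$, whereas for any fixed $F\in\mathcal F$ the i.i.d. CLT forces $\Delta_n(F)\to0$ as $n\to\infty$. Hence $\limsup_{n\to\infty}\Delta_n(F)/L_{\bullet,n}(g_1,\eps,\gamma)=0$ for every $F$, and taking the supremum over $F$ preserves the value $0$. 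The main obstacle across the whole theorem is part~(i)--(ii)'s numerator: pinning down the exact leading constant in the lattice Esseen asymptotic $\Delta_n\sim C_1(p)/\sqrt n$ and verifying that $C_1(p)=\frac{p+1}{3\sqrt{2\pi}}\cdot\frac{1}{\sqrt{pq}}$ (or the analogous Rozovskii form) requires a careful Edgeworth computation accounting for both the third-cumulant term and the discontinuity jump, rather than merely an order-of-magnitude bound; this is where I would spend the bulk of the rigorous work, likely citing a precise lattice expansion and evaluating it at $x=0$ where the extremal discrepancy occurs.
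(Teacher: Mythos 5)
Your overall strategy is exactly the paper's: for (iii) use $L_{\bullet,n}(g_1,\eps,\gamma)\ge1$ from~\eqref{g_1_Esseen}--\eqref{g_1_Rozovsky} together with $\Delta_n(F)\to0$ for fixed $F$; for (i)--(ii) take the two-point law~\eqref{two_point_dist}, read the fraction off Theorem~\ref{ThFractionsFor2PointDistr} in the regime $n(\eps^2\wedge1)>p/q$, and divide by the Esseen lattice asymptotic $\limsup_n\Delta_n\sqrt n=\big(\abs{\E X_1^3}+3h\sigma_1^2\big)/(6\sqrt{2\pi}\sigma_1^3)$ with span $h=1/\sqrt{pq}$, which yields $\frac{p+1}{3\sqrt{2\pi pq}}$. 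Parts (i) and (iii) are essentially the paper's proof (the paper likewise leans on Esseen 1956 both for the expansion and for the extremality of $p=\sqrt{5/2}-1$ at $\gamma=1$, so your deferral of that computation is acceptable).

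There is, however, a concrete slip in part (ii). You claim that in $\max\big\{\sqrt{q/(np)},\sqrt{p^3/(nq)}\big\}$ the second term wins ``since $p>\tfrac12$.'' Comparing $q/p$ with $p^3/q$ reduces to comparing $q$ with $p^2$, and $q>p^2$ precisely for $p\in\big(\tfrac12,\tfrac{\sqrt5-1}{2}\big)$; so on that whole subinterval the maximum is $\sqrt{q/(np)}$, not $\sqrt{p^3/(nq)}$. This is not cosmetic: the two-branch structure of~\eqref{ABERozLowBound} comes exactly from analyzing $f(p)=\frac{p+1}{a(2p-1)+(1-p)}$ on $\big(\tfrac12,\tfrac{\sqrt5-1}{2}\big]$ (with $a=\gamma/(\eps\vee1)$), whose derivative has the sign of $2-3a$ --- that is where the threshold $a=\tfrac23$ and the boundary value $f(\tfrac12+)=3$, i.e.\ $1/\sqrt{2\pi}$, come from. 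If you instead carry the denominator $a(2p-1)+p^2$ down to $p\to\tfrac12$, you get $f\to6$, i.e.\ a purported bound $2/\sqrt{2\pi}$, which is \emph{not} a valid lower bound for $\abe(g_0,\eps,\gamma)$ because the fraction you divided by is smaller than the true one there. So you must split the optimization over $p$ at $p=\tfrac{\sqrt5-1}{2}$ (where $q=p^2$ and the two expressions agree) and treat the two branches separately, as the paper does; with that correction your argument goes through and reproduces~\eqref{ABERozLowBound}.
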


Values of $\underline{\abe}(\gamma)$ for some $\gamma$ and the corresponding extreme values of~$p$ are given in table~\ref{Tab:EsseenABELowBound(gamma)}.

\begin{table}[h]
\begin{center}
\begin{tabular}{||c||c|c|c|c|c|c|c|c|c|c||}
\hline
$\gamma$&$0.1$&$0.2$&$0.4$&$0.56$&$1$&$1.5$&$2$&$3$&$4$&$5$
\\\hline
$p$&$0.6112$&$0.6039$&$0.5871$&$0.5710$ &$0.5812$&$0.6733$&$0.6666$&$0.6340$&$0.6202$&$0.6126$
\\\hline
$\underline{\abe}(\gamma)$&$0.5511$&$0.5384$&$0.5111$&$0.4868$ &$0.4097$&$0.3627$&$0.3324$&$0.2703$&$0.2240$&$0.1904$
\\\hline
\end{tabular}
\end{center}
\caption{Values of the lower bound $\underline{\abe}(\gamma)$ (see~\eqref{ABEEsseenLowBound}), rounded down, for the asymptotically best constant $\abe(g_0,\eps,\gamma) $ from inequality~\eqref{AW_ineq} for some~$\gamma$. The second line contains rounded extreme values of~$p$ in~\eqref{ABEEsseenLowBound}.}
\label{Tab:EsseenABELowBound(gamma)}
\end{table}

\begin{proof}
Let us show that $\abe(g_1, \eps, \gamma) = 0$. 
According to~\eqref{g_1_Esseen} and~\eqref{g_1_Rozovsky}, ${\es(g_1,\eps, \gamma) \ge 1}$, $\roz(g_1,\eps, \gamma) \ge 1$ for all $\eps,\gamma>0$, so that for the both inequalities \eqref{AW_ineq} and~\eqref{Rozovsky_ineq} we have $\abe(g_1, \eps, \gamma) \le \sup\limits_{F} \limsup\limits_{n \to \infty} \Delta_n(F) = 0,$ whence, with the account of non-negativity of $\abe(g_1, \eps, \gamma)$, the statement of point (iii) follows.

Now let us estimate from below the constants $\abe(g_0, \eps, \gamma)$. Take i.i.d. r.v.'s  $X_1,\ldots,X_n$ with distribution~\eqref{two_point_dist}, where $p>1/2$. Then, by virtue of theorem~\ref{ThFractionsFor2PointDistr}, for $n(\eps^2\wedge1) > \frac{p}{q}$ we have
$$
\es(g_0,\eps, \gamma) = 
\frac{\max\{ q, \gamma q^2 + p^2, \gamma(p-q) \}}{\sqrt{npq}}.
$$

The r.v. $X_1$ is lattice with a span $h = {1}/{\sqrt{pq}}$. The Esseen asymptotic expansion~\cite{Esseen1956} for lattice distributions with the span $h$ implies that
$$
\limsup_{n \rightarrow \infty} \Delta_n \sqrt{n} = \frac{\abs{\E X_1^3} + 3h\sigma_1^2}{6\sqrt{2\pi}\sigma_1^3}.
$$
For the chosen distribution of $X_1$ we have $\sigma_1^2=1$, $\E X_1^3=(p-q)/\sqrt{pq}$, therefore
$$
\lim_{n \rightarrow +\infty}\Delta_n \sqrt{n} = \frac{p-q+3}{6\sqrt{2\pi pq}} = \frac{p+1}{3\sqrt{2\pi pq}},
$$
and, hence, in inequality \eqref{AW_ineq} we have
$$
\abe(g_0, \eps, \gamma)\ge \lim_{n\to\infty} \frac{\Delta_n}{\es(g_0,\eps, \gamma)} = \lim_{n\to\infty} \frac{\Delta_n \sqrt{npq}}{\max\{ q, \gamma q^2 + p^2, \gamma(p-q) \}}  =
$$
$$= \frac{p+1}{3\sqrt{2\pi} \cdot \max\{ q, \gamma q^2 + p^2, \gamma(p-q) \}}
$$
for all  $p\in(1/2,1)$, $\eps,\gamma>0$, whence, with the account of arbitrariness of the choice of~$p$, the statement of point (ii) follows. In particular, for $\gamma = 1$ we obtain
$$
\abe(g_0, \eps,1)\ge \sup_{\frac12 < p < 1} \frac{p+1}{3\sqrt{2\pi} (p^2 + q ^ 2)} 
=\frac1{6\sqrt{2\pi}}\sup_{\frac12 < p < 1} \frac{\abs{\E X_1^3}+3h\sigma_1^2}{\E|X_1|^3} 
=\frac{\sqrt{10} + 3}{6\sqrt{2\pi}} = 0.4097\ldots
$$
for $p = \sqrt{5/2} - 1 = 0.5811\ldots$ (extremality of the specified $p$ was proved in~\cite{Esseen1956}).

Now let us consider inequality \eqref{Rozovsky_ineq}. Taking into account that for $n(\eps^2\wedge1)> \frac{p}{q}$ we have
$$
\roz(g_0,\eps, \gamma) = \frac{\gamma(p - q)(\eps^{-1}\wedge1) + \max\{q, p^2\}}{\sqrt{npq}},
\quad
\max\{q, p^2\}=
\begin{cases}
q,& p \in (\frac{1}{2},\frac{\sqrt{5} - 1}{2}],
\\
p^2& p \in (\frac{\sqrt{5} - 1}{2}, 1),
\end{cases}
$$
and denoting
$$
a=\gamma(\eps^{-1}\wedge1)=\frac{\gamma}{\eps\vee1},
$$
we obtain
$$
f(p)\coloneqq 3\sqrt{2\pi} \lim_{n\to\infty} \frac{\Delta_n}{\roz(g_0,\eps, \gamma)} 
= \begin{cases}
\displaystyle \frac{p + 1}{a(2p - 1)  + 1 - p}, 
&\displaystyle
p \in \bigg(\frac{1}{2},\frac{\sqrt{5} - 1}{2}\bigg], 
\\[3mm]
\displaystyle \frac{p + 1}{a(2p - 1)  + p^2}, 
&\displaystyle
p \in \bigg(\frac{\sqrt{5}-1}{2},1\bigg).
\end{cases}
$$
Let us investigate the behaviour of $f(p)$ in dependence of $a>0$. For $p\in\big(\frac{1}{2},\frac{\sqrt{5}-1}{2}\big]$ the numerator of $f'(p)$ takes the form
$$
a(2p - 1) + 1 - p - (p + 1)(2a - 1) = 2 - 3a,
$$
so that $f(p)$ is monotonically decreasing, if $a>\frac{2}{3}$, and monotonically increasing, if $a<\frac{2}{3}$, while for $p \in \big(\frac{\sqrt{5}-1}{2},1\big)$ the numerator of  $f'(p)$ has the form
$$
a(2p-1) + p^2 - 2(p+1)(a + p) = -p^2 - 2p - 3a < 0,\quad 1/2<p<1,
$$
and hence, $f(p)$ decreases strictly monotonically. Therefore,
$$
3\sqrt{2\pi}\cdot\abe(g_0, \eps, \gamma) \ge \sup_{1/2<p<1}f(p)=
\begin{cases}
\displaystyle f\bigg(\frac{\sqrt{5}-1}{2}\bigg)
=\frac{1 + \sqrt{5}}{2a(\sqrt{5} - 2) + 3 - \sqrt{5}},
& a<\frac{2}{3}, 
\\[3mm]
f\big(\frac12+\big)=3,
& a\ge\frac{2}{3},
\end{cases}
$$
whence the statement of point (ii) follows immediately.
\end{proof}

\section{Lower bounds for the asymptotically exact constants}

First of all note that asymptotically exact constants $\aex(g_1, \eps, \gamma)$ and the lower asymptotically exact constants $\lowaex(g_1, \eps, \gamma)$ are defined for none of the inequalities~\eqref{AW_ineq}, \eqref{Rozovsky_ineq}, since the corresponding fractions $\es(g_1,\eps,\gamma)$, $\roz(g_1,\eps,\gamma)$ are  bounded from below by one uniformly with respect to $\eps$ and $\gamma$ (see~\eqref{g_1_Esseen} and~\eqref{g_1_Rozovsky}) and, hence, cannot be infinitesimal.

\begin{Theorem} \label{theorem_lae_lb}
In the both inequalities \eqref{AW_ineq}, \eqref{Rozovsky_ineq} for all $\eps,\ \gamma > 0$  we have
\begin{equation}\label{LowAEX-low-bound}
\lowaex(g_0, \eps, \gamma) \ge \frac{1}{2\sqrt{2\pi}},
\end{equation}
$$
\sup_{g \in G} \upaex(g, \eps, \gamma) = \upaex(g_0, \eps, \gamma) \ge \upaexcond(g_0, \eps, \gamma) \ge \frac{1}{2(\eps\wedge1)},
$$
$$
\inf_{g \in G} \upaex(g, \eps, \gamma) = \upaex(g_1, \eps, \gamma) \ge \upaexcond(g_1, \eps, \gamma) \ge
\begin{cases}
0.5,
&\eps\le1,
\\
\exp\{-\eps^{-2}\}I_0(\eps^{-2})/2,
&1<\eps\le1.1251,
\\
0.2344,&\eps\ge1.1251.
\end{cases}
$$
where $I_0(z)=\sum_{k=0}^{\infty} {(z/2)^{2k}}/{(k!)^2} $ is the modified Bessel function of the zero order.
\end{Theorem}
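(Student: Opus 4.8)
The plan is to derive every bound from explicit i.i.d.\ families, after first reducing to the extremal weights. The identities $\sup_{g\in\G}\upaex(g,\eps,\gamma)=\upaex(g_0,\eps,\gamma)$ and $\inf_{g\in\G}\upaex(g,\eps,\gamma)=\upaex(g_1,\eps,\gamma)$ follow verbatim from the monotonicity $L_{\bullet,n}(g_0,\cdot,\cdot)\le L_{\bullet,n}(g,\cdot,\cdot)\le L_{\bullet,n}(g_1,\cdot,\cdot)$ recorded in~\eqref{gmin_gmax_ineq}, exactly as the analogous identities for $\CE,\CR$ were obtained in Section~4. For the summands I take the symmetric three-point law with an $n$-dependent spike,
\[
\Prob(X_1=\pm s\sqrt n)=r,\quad \Prob(X_1=0)=1-2r,\quad 2rs^2n=1,
\]
so that $\E X_1=0$, $\E X_1^2=1$, $B_n^2=n$. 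Symmetry forces $\essM\equiv0$, the $\gamma$-terms disappear, and $\es(\,\cdot\,)=\roz(\,\cdot\,)$, so each estimate holds simultaneously for~\eqref{AW_ineq} and~\eqref{Rozovsky_ineq}. A direct computation (matching Theorem~\ref{ThFractionsFor2PointDistr} for this lattice) gives $\sigma_1^2(z)=\I(z\le s\sqrt n)$, hence
\[
\es(g_0,\eps,\gamma)=\min\{\eps\wedge1,\,s\},\qquad \es(g_1,\eps,\gamma)=\max\{1,\ \min\{\eps,s\}\},
\]
both independent of $n$, while $\wave{S}_n=s\,(N_+-N_-)$, where $N_\pm$ count the values $\pm s\sqrt n$ and $N=N_++N_-\sim\mathrm{Bin}(n,2r)$ has mean $2rn=s^{-2}=:\lambda$.

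The engine is that, with $s$ (equivalently $\lambda$) held fixed and $n\to\infty$, $N$ becomes $\mathrm{Pois}(\lambda)$, so $\wave{S}_n\Rightarrow sZ$ with $Z=\sum_{j=1}^{\Pi}\sigma_j$, $\Pi\sim\mathrm{Pois}(\lambda)$ and $\sigma_j$ independent signs; since $sZ$ is a fixed lattice law and $\Phi$ is continuous, $\Delta_n\to D(s):=\sup_x|\Prob(sZ<x)-\Phi(x)|$. The central mass is $\Prob(Z=0)=e^{-\lambda}\sum_k(\lambda/2)^{2k}/(k!)^2=e^{-\lambda}I_0(\lambda)$, and by symmetry the deviation of $\Prob(sZ<\cdot)$ from $\Phi$ at the origin equals $\tfrac12\Prob(Z=0)$, so $D(s)\ge\tfrac12 e^{-\lambda}I_0(\lambda)$. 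For the lower asymptotically exact constant I fix $\ell<\eps\wedge1$ and set $s=\ell$, so that $\es(g_0,\eps,\gamma)=\ell$ for every $n$ and $\Delta_n/\ell\to D(\ell)/\ell$; letting $\ell\to0$, i.e.\ $\lambda=\ell^{-2}\to\infty$, the classical asymptotics $e^{-\lambda}I_0(\lambda)\sim(2\pi\lambda)^{-1/2}=\ell/\sqrt{2\pi}$ give $D(\ell)/\ell\to\tfrac1{2\sqrt{2\pi}}$, which is~\eqref{LowAEX-low-bound}.

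For $\upaexcond(g_0,\eps,\gamma)$ I instead drive the spikes to be rare and enormous, $s\to\infty$ (so $\lambda\to0$): then $\wave{S}_n\Rightarrow\delta_0$ and $\Delta_n\to\sup_x|\I(x>0)-\Phi(x)|=\tfrac12$, while $\es(g_0,\eps,\gamma)=\eps\wedge1$; the ratio is $\tfrac1{2(\eps\wedge1)}$. The same degeneration gives $\es(g_1,\eps,\gamma)=\max\{1,\eps\}=1$ whenever $\eps\le1$, whence $\upaexcond(g_1,\eps,\gamma)\ge\tfrac12$ there. For $\eps>1$ the denominator is $\es(g_1,\eps,\gamma)=\max\{1,\min\{\eps,s\}\}$, and one maximizes $D(s)/\max\{1,\min\{\eps,s\}\}$ over the family, admitting if needed an independent bounded ``Gaussian-generating'' core in $X_1$ to lower $\es(g_1)$ while keeping a jump at scale tied to $\eps$. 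The interior stationarity condition for $\tfrac12 e^{-\lambda}I_0(\lambda)$ against the linear denominator reads $(\lambda-\tfrac12)I_0(\lambda)=\lambda I_1(\lambda)$ (with $I_0'=I_1$), and it is met at $\lambda=\eps^{-2}$ precisely when $\eps=1.1251\ldots$; this produces the two regimes: for $1<\eps\le1.1251$ the optimum is at the $\eps$-tied jump scale, giving $\tfrac12 e^{-\eps^{-2}}I_0(\eps^{-2})$, and for $\eps\ge1.1251$ it is at the stationary point, giving the constant $0.2344$.

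The principal obstacle is the limit $\Delta_n\to D(s)$ itself: because the extremal family varies with $n$ (indeed $r=r_n\to0$, so the array is uniformly infinitesimal but not Lindeberg), no fixed-law Esseen expansion applies and one must establish convergence of the Kolmogorov distance along a triangular array. I would do this by combining the exact Poisson limit $\mathrm{Bin}(n,2r_n)\to\mathrm{Pois}(\lambda)$ with the fact that the limit $sZ$ is lattice of fixed span, so that weak convergence of the step distribution functions is uniform when tested against the continuous $\Phi$. The second, purely analytic difficulty---needed only for the sharp constants on $g_1$---is to prove that $D(s)$ \emph{equals} (and is not merely bounded below by) $\tfrac12 e^{-1/s^2}I_0(1/s^2)$, i.e.\ that the maximal deviation of the symmetric compound-Poisson distribution function from $\Phi$ occurs at the central atom; granting this unimodality, the one-dimensional optimization is routine and localizes both the crossover $1.1251\ldots$ and the value $0.2344$.
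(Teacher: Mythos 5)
Your construction is, after the reparametrization $s=1/\sqrt{np}$, $r=p/2$, exactly the paper's extremal family (the symmetric three\nobreakdash-point law $\Prob(|X_1|=1)=p=1-\Prob(X_1=0)$), and your values of $\es(g_0,\cdot)$, $\es(g_1,\cdot)$ and the final optimization of $\tfrac12\sqrt{\alpha}\,e^{-\alpha}I_0(\alpha)$ over the admissible range of $\alpha=\lambda=s^{-2}$ coincide with the paper's. The genuine difference is how the key $n\to\infty$ limit is justified, and the paper's route dissolves both obstacles you flag. You aim to prove $\Delta_n\to D(s)=\sup_x|\Prob(sZ<x)-\Phi(x)|$ via a compound-Poisson weak limit along the triangular array, and then worry (i) about uniformity of that convergence and (ii) about whether $D(s)$ equals, rather than merely dominates, $\tfrac12 e^{-\lambda}I_0(\lambda)$. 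Neither is needed: since every claim is a lower bound, it suffices to evaluate the discrepancy at the single point $x=0$, where symmetry gives $\Delta_n(F_p)\ge\Phi(0)-\Prob(S_n<0)=\tfrac12\Prob(S_n=0)$, and $\Prob(S_n=0)$ is an explicit sum whose limit inferior along $p=\alpha/n$ is bounded below termwise by $e^{-\alpha}I_0(\alpha)$ (the one analytic input, quoted in the paper from an earlier monograph). In particular your ``second difficulty'' --- unimodality of the deviation of the compound-Poisson d.f.\ from $\Phi$ --- is a red herring; what the $g_1$, $\eps>1$ optimization actually requires is only the monotonicity profile of $f(\alpha)=\sqrt{\alpha}\,e^{-\alpha}I_0(\alpha)$ (increasing up to $\alpha_*\approx0.79$, decreasing after), which your stationarity equation $(\lambda-\tfrac12)I_0(\lambda)=\lambda I_1(\lambda)$ correctly locates, and which the paper itself only verifies numerically. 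Two small repairs: at the $\eps$-tied scale $\ell=\eps$ the ratio your family produces is $\tfrac1{2\eps}e^{-\eps^{-2}}I_0(\eps^{-2})=\tfrac12 f(\eps^{-2})$, i.e.\ it carries the factor $\eps^{-1}=\sqrt{\alpha}$ that the displayed form $\tfrac12 e^{-\eps^{-2}}I_0(\eps^{-2})$ omits, so the construction certifies the slightly smaller value; and the auxiliary ``Gaussian-generating core'' you contemplate is unnecessary, since the supremum over $\ell\in(1,\eps)$ of the pure three-point family already yields both stated regimes.
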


As a lower bound to the asymptotically exact constant $\aex(g_0,\eps,\gamma)$, due to~\eqref{asypt_const_in}, both the asymptotically best $\abe(g_0,\eps,\gamma)$ and the lower asymptotically exact $\lowaex(g_0,\eps,\gamma)$ constants may serve. But the lower bound of the lower asymptotically exact constant $(2\sqrt{2\pi})^{-1}=0.1994\ldots$ stated in~\eqref{LowAEX-low-bound} turns to be less accurate in Rozovskii-type inequality, than the lower bound of the asymptotically best constant in~\eqref{ABERozLowBound}, since the minorant in~\eqref{ABERozLowBound} is monotone with respect to $\gamma$ for $\eps\le1$ and with respect to $\gamma/\eps$ for $\eps>1$ varying within the range from $(\sqrt{2\pi})^{-1}$as $\gamma/(\eps\vee1)\to\frac23$ to
$$
\frac{1 + \sqrt{5}}{3\sqrt{2\pi} \big(3 - \sqrt{5}\big)} =0.5633\ldots \quad\text{as}\quad \frac{\gamma}{\eps\vee1}\to0
$$
and thus staying always greater than $(2\sqrt{2\pi})^{-1}$. The minorant to the asymptotically best constant in~\eqref{ABEEsseenLowBound} monotonically decreases with respect to $\gamma$ and does not depend on $\eps$. Therefore, there exists a unique value $\gamma_0>0$ such that $\underline{\abe}(\gamma_0)=(2\sqrt{2\pi})^{-1}$ (and even $\gamma_0>1$ due to that for $\gamma=1$ we have  $\underline{\abe}(1)=(\sqrt{10}+3)/(6\sqrt{2\pi})=0.4097\ldots>(2\sqrt{2\pi})^{-1}$), so that for all $\gamma<\gamma_0$ we have $\underline{\abe}(\gamma)>(2\sqrt{2\pi})^{-1}$. 
It is easy to make sure that $\gamma_0= 4.7010\ldots$\ . Therefore, as a lower bound for the constant $\aex(g_0,\eps,\gamma)$ in Esseen-type inequality~\eqref{AW_ineq} it is reasonable to choose the lower bound in~\eqref{LowAEX-low-bound} for $\gamma\le\gamma_0$ and the lower bound in~\eqref{ABEEsseenLowBound} for $\gamma>\gamma_0$. Let us formulate this as a corollary.

\begin{cor}
For the asymptotically exact constant in inequality~\eqref{AW_ineq} the lower bound
$$
\inf_{\eps>0}\aex(g_0,\eps,\gamma)\ge \underline{\abe}(\gamma\wedge\gamma_0), 
$$ 
holds, where $\underline{\abe}(\gamma)$ is defined in~\eqref{ABEEsseenLowBound}, and $\gamma_0=4.7010\ldots$ is the unique root of the equation $\underline{\abe}(\gamma)=(2\sqrt{2\pi})^{-1}$ for $\gamma>0$. In particular, with the account of table~\ref{Tab:Ess+RozAEX(eps,gamma)<=} the following two-sided bounds hold:
$$
0.4097\ldots=\tfrac{\sqrt{10}+3}{6\sqrt{2\pi}}\le\aex(g_0,1,1)\le1.80.
$$

For the asymptotically exact constant in~\eqref{Rozovsky_ineq} we have
$$
\aex(g_0,\eps,\gamma)\ge
\begin{cases}
\displaystyle
\frac{1 + \sqrt{5}}{3\sqrt{2\pi} \big(2\gamma(\eps^{-1}\wedge1)(\sqrt{5} - 2) + 3 - \sqrt{5}\big)}, &\displaystyle
\frac\gamma{\eps\vee1} <\frac{2}{3}, 
\\[3mm]
\displaystyle\frac{1}{\sqrt{2\pi}},
&\displaystyle
\frac\gamma{\eps\vee1} \ge \frac{2}{3}.
\end{cases}
$$
In particular, with the account of table~\ref{Tab:Ess+RozAEX(eps,gamma)<=}  the following two-sided bounds hold:
$$
0.3989\ldots=\tfrac{1}{\sqrt{2\pi}}\le\aex(g_0,1,1)\le1.80.
$$
\end{cor}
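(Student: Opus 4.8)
The plan is to read off both bounds from the elementary chain~\eqref{asypt_const_in}, which for each of the inequalities~\eqref{AW_ineq}, \eqref{Rozovsky_ineq} gives
\[
\aex(g_0,\eps,\gamma)\ \ge\ \max\{\lowaex(g_0,\eps,\gamma),\ \abe(g_0,\eps,\gamma)\},
\]
and then to substitute the lower bounds for $\lowaex(g_0,\eps,\gamma)$ furnished by Theorem~\ref{theorem_lae_lb} and for $\abe(g_0,\eps,\gamma)$ furnished by Theorem~\ref{theorem_AB_lb}. No new extremal construction is required; the whole argument reduces to combining these two theorems with the monotonicity of the minorant $\underline{\abe}(\gamma)$ recorded in the discussion preceding the corollary.

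For the Esseen-type inequality~\eqref{AW_ineq} I would use that~\eqref{LowAEX-low-bound} gives $\lowaex(g_0,\eps,\gamma)\ge(2\sqrt{2\pi})^{-1}$ for every $\eps>0$, while the proof of~\eqref{ABEEsseenLowBound} in fact yields $\abe(g_0,\eps,\gamma)\ge\underline{\abe}(\gamma)$ for every individual $\eps>0$ (its right-hand side being $\eps$-free). Since both minorants are independent of $\eps$, taking the infimum over $\eps$ gives $\inf_{\eps>0}\aex(g_0,\eps,\gamma)\ge\max\{\underline{\abe}(\gamma),(2\sqrt{2\pi})^{-1}\}$, and it only remains to identify this maximum with $\underline{\abe}(\gamma\wedge\gamma_0)$. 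By the strict monotonicity of $\underline{\abe}$ and the definition of $\gamma_0$ as the unique root of $\underline{\abe}(\gamma_0)=(2\sqrt{2\pi})^{-1}$, for $\gamma\le\gamma_0$ one has $\underline{\abe}(\gamma)\ge(2\sqrt{2\pi})^{-1}$, so the maximum equals $\underline{\abe}(\gamma)=\underline{\abe}(\gamma\wedge\gamma_0)$, whereas for $\gamma>\gamma_0$ it equals $(2\sqrt{2\pi})^{-1}=\underline{\abe}(\gamma_0)=\underline{\abe}(\gamma\wedge\gamma_0)$; the value $\gamma_0=4.7010\ldots$ and the bound $\gamma_0>1$ are exactly as stated before the corollary.

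For the Rozovskii-type inequality~\eqref{Rozovsky_ineq} the situation is simpler, because the minorant in~\eqref{ABERozLowBound} never drops below $(\sqrt{2\pi})^{-1}>(2\sqrt{2\pi})^{-1}$, so it already dominates the $\lowaex$-bound and the chain~\eqref{asypt_const_in} gives $\aex(g_0,\eps,\gamma)\ge\abe(g_0,\eps,\gamma)\ge$ the announced piecewise expression verbatim. The two sharp two-sided estimates at $\eps=\gamma=1$ then follow by specialization: the lower bounds are $\aex(g_0,1,1)\ge\underline{\abe}(1)=(\sqrt{10}+3)/(6\sqrt{2\pi})=0.4097\ldots$ in~\eqref{AW_ineq} and $\aex(g_0,1,1)\ge(\sqrt{2\pi})^{-1}=0.3989\ldots$ in~\eqref{Rozovsky_ineq} (here $\gamma/(\eps\vee1)=1\ge\tfrac23$). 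For the matching upper bound $1.80$ I would invoke~\eqref{identical_g}: at $\eps=1$ one has $\es(g_0,1,\gamma)=\es(g_*,1,\gamma)$ and $\roz(g_0,1,\gamma)=\roz(g_*,1,\gamma)$, whence $\aex(g_0,1,1)=\aex(g_*,1,1)$; and since the supremum defining $\aex(g_*,\cdot)$ runs over the i.i.d.\ subclass of the distributions admitted in~\eqref{aexCessDef}, \eqref{aexCrozDef}, we get $\aex(g_*,1,1)\le\aexess(1,1)$ (resp.\ $\le\aexroz(1,1)$), and the latter do not exceed $1.80$ by~\eqref{aexess(eps,gamma)<=}, \eqref{aexroz(eps,gamma)<=} (cf.\ Table~\ref{Tab:Ess+RozAEX(eps,gamma)<=}).

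The only genuinely non-routine ingredient is the identification $\max\{\underline{\abe}(\gamma),(2\sqrt{2\pi})^{-1}\}=\underline{\abe}(\gamma\wedge\gamma_0)$, i.e.\ locating the crossover point $\gamma_0$ and exploiting the monotonicity of $\underline{\abe}$; everything else is a direct quotation of~\eqref{asypt_const_in}, \eqref{identical_g}, and the two preceding theorems, together with the inclusion of the i.i.d.\ class into the general one.
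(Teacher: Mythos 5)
Your proposal is correct and follows essentially the same route as the paper: the corollary is obtained exactly by combining \eqref{asypt_const_in} with Theorems~\ref{theorem_AB_lb} and~\ref{theorem_lae_lb}, identifying $\max\{\underline{\abe}(\gamma),(2\sqrt{2\pi})^{-1}\}=\underline{\abe}(\gamma\wedge\gamma_0)$ via the monotonicity of $\underline{\abe}$, noting that the Rozovskii minorant \eqref{ABERozLowBound} already dominates $(2\sqrt{2\pi})^{-1}$, and reading the upper bounds off Table~\ref{Tab:Ess+RozAEX(eps,gamma)<=} through \eqref{identical_g} and the inclusion of the i.i.d.\ class. Your case split is in fact stated more carefully than the paper's own prose, which inadvertently swaps the two regimes ($\gamma\le\gamma_0$ versus $\gamma>\gamma_0$) in the sentence preceding the corollary, even though the formula $\underline{\abe}(\gamma\wedge\gamma_0)$ encodes the correct choice.
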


\begin{proof}[Proof of theorem~\ref{theorem_lae_lb}]
The relations between the constants follow from their definitions (see also~\eqref{asypt_const_in}). Therefore, it remains to prove the lower bounds for $\lowaex(g_0, \eps, \gamma)$, $\upaexcond(g_0, \eps, \gamma)$, and $\upaexcond(g_1, \eps, \gamma)$.

Following~\cite[\S\,2.3.2]{Shevtsova2016}, consider i.i.d. r.v.'s $X_1,\ldots, X_n$ with a symmetric tree-point distribution
$$
 \Prob(\abs{X_1} = 1) = p = 1 - \Prob(X_1 = 0) \in (0,1),
$$
whose d.f. will be denoted by $F_p(x)=\Prob(X_1<x),$ $x\in\R$. Then
$$
\E X_1 = 0, \quad \E X_1^2 = p,\quad 
\sigma_1^2(z)=\E X_1^2\I(|X_1|\ge z)
= \begin{cases}
p,& z \le 1, 
\\
0,& z > 1,
\end{cases}
\quad \mu_1(\,\cdot\,)\equiv0,
$$
$$
B_n^2=np,\quad g_0(z)=\min\{z,\sqrt{np}\},\quad g_1(z)=\max\{z,\sqrt{np}\},
$$
and the fractions $\es$, $\roz$ coincide, do not depend on $\gamma$ and take the form
$$
L_n(g,\eps)\coloneqq \es(g,\eps,\,\cdot\,) =\roz(g,\eps,\,\cdot\,) 
=\sup_{0<z < \eps B_n} 
\frac{g(z)}{B_n^2 g(B_n)}\sum_{k=1}^n \sigma_k^2(z)
=\sup_{0<z < \eps\sqrt{np}} 
\frac{g(z)\sigma_1^2(z)}{pg(\sqrt{np})}.
$$
Due to the monotonicity of $g\in\G$, we have
$$
\sup_{0 < z < \eps \sqrt{np}} g(z)\sigma_1^2(z)
=pg(\eps \sqrt{np}\wedge1)
= p \, \min\{g(1), g(\eps \sqrt{np})\},
$$
and hence,
$$
L_n(g,\eps) = \frac{\min\left\{g(1),g(\eps\sqrt{np})\right\}}{g(\sqrt{np})},\quad \eps>0,\ n\in\N,\ p\in(0,1),
$$
in particular,
$$
L_n(g_0,\eps)
= \min\left\{ \frac{1}{\sqrt{np}}, \eps,1 \right\}, \quad L_n(g_1,\eps) = \min\left\{ \frac{1}{\sqrt{np}}\vee1,\,\eps\vee1\right\},\quad \eps>0,
$$
and $L_n(g_1,\eps)=1$ for $\eps\le1.$ Moreover, for all $n > \ell^{-2} \vee \eps^{-2}\vee1$ we have
$$
\left\{p\in(0,1): L_n(g_0,\eps) = \ell\right\} = 
\Big\{p\in(0,1): \min\Big\{\frac{1}{\sqrt{np}}, \eps,1\Big\} = \ell\Big\} =$$
$$=\begin{cases}
\left\{\ell^{-2}/n\eqqcolon p(\ell)\right\}, & \ell < \eps\wedge1,
\\
\big(0, (\eps^{-2}\vee1)/n\big), & \ell = \eps\wedge1,
\\
\emptyset, & \ell > \eps\wedge1,
\end{cases}
\quad \text{for all } \eps>0,
$$
$$
\left\{p\in(0,1): L_n(g_1,\eps) = \ell\right\}
=\begin{cases}
\left\{p: \ell = 1\right\}
=\begin{cases}
(0, 1), & \ell = 1,
\\
\emptyset, & \text{otherwise},
\end{cases}
& \eps\le1,
\\
\Big\{p:\min\big\{\eps,\frac{1}{\sqrt{np}}\vee1\Big\}=\ell\big\}
=\begin{cases}
p(\ell), & \ell\in[1,\eps],
\\
\emptyset, & \text{otherwise},
\end{cases}
& \eps>1.
\end{cases}
$$
Due to that the fractions do not depend on $\gamma$, we obtain the lower bounds
$$
\inf_{\gamma>0} \lowaex(g_0, \eps, \gamma) \ge  \limsup_{\ell\to0}\limsup_{n\to\infty}\sup_{p\in(0,1)\colon L_n(g_0,\eps)=\ell}\Delta_n(F_p)/\ell
=\limsup_{\ell\to0} \limsup_{n\to\infty } \Delta_n(F_{p(\ell)}) / \ell,
$$
$$
\inf_{\gamma>0} \upaexcond(g, \eps, \gamma) \ge  \sup_{\ell>0}\limsup_{n\to\infty}\sup_{p\in(0,1)\colon L_n(g,\eps)=\ell}\Delta_n(F_p)/\ell,\quad \eps>0,
$$
in particular, for all $\gamma>0$ we have
$$
\upaexcond(g_0, \eps, \gamma) 
\ge \max\Big\{\sup_{0 < \ell < \eps\wedge1} \limsup_{n \rightarrow \infty} \Delta_n(F_{p(\ell)}) / \ell, \quad \limsup_{n \rightarrow \infty} \sup_{0 < p \le (\eps^{-2}\vee1)/n}\Delta_n(F_{p}) / (\eps\wedge1)\Big\},
$$
$$
\upaexcond(g_1, \eps, \gamma) \ge 
\begin{cases}
\displaystyle
\limsup_{n \rightarrow \infty} \sup_{0 < p < 1}\Delta_n(F_{p}),
&\eps\le1,
\\
\displaystyle
\sup_{1<\ell<\eps}\limsup_{n\to\infty}\Delta_n(F_{p(\ell)})/\ell,
&\eps>1.
\end{cases}
$$

Let us find the lower bound for the uniform distance $\Delta_n(F_p)$. Due to the symmetry, we have $2\Prob(S_n < 0) + \Prob(S_n = 0) = 1$, that is,  $\Prob(S_n < 0)=(1-\Prob(S_n = 0))/2$, whence for even $n$ we obtain
$$
\Delta_n(F_p) \ge \Phi(0) - \Prob(S_n < 0) = \frac{\Prob(S_n = 0)}{2} = \frac{(1-p)^n}{2} \sum_{k=0}^{n/2} \frac{n!}{(n-2k)!(k!)^2}\left(\frac{p/2}{1-p}\right)^{2k}.
$$
Note that $\limsup\limits_{p\to0}\Delta_n(F_p) \ge\tfrac12,$ while with $p=\alpha/n$, $\alpha\in(0,n)$ we have
$$
\Delta_n(F_{\alpha/n}) \ge \frac{1}{2}\left(1-\frac{\alpha}{n}\right)^n \sum_{k=0}^{n/2} \frac{n!}{(n-2k)!(k!)^2} \left(\frac{1/2}{n/\alpha-1}\right)^{2k}
= \frac{e^{-\alpha} + \delta_n(\alpha)}{2} \sum_{k=0}^{n/2}u_{k,n}(\alpha),
$$
where
$$
u_{k,n}(\alpha) = \frac{n!}{(n-2k)!(k!)^2} \left(\frac{1/2}{n/\alpha - 1}\right)^{2k}, \quad \lim_{n \rightarrow \infty}\delta_n(\alpha) = 0,\quad \alpha>0.
$$
In~\cite[p.\,268--269]{Shevtsova2016} it was shown that for every $\alpha>0$
$$
\limsup_{n \rightarrow \infty} \sum_{k=0}^{n/2}u_{k,n}(\alpha) \ge \sum_{k=0}^{\infty} \frac{(\alpha/2)^{2k}}{(k!)^2} = I_0(\alpha).
$$
Therefore,
\begin{equation}\label{lim_nDelta_n(F_a/n)}
\limsup_{n \rightarrow \infty} \Delta_n(F_{\alpha/n})\ge  \tfrac{1}{2}e^{-\alpha}I_0(\alpha),\quad \ell>0.
\end{equation}
Let us bound from below expressions like $\sup_p\Delta_n(F_{p})$ as
\begin{equation}\label{sup_pDelta_n(F_p)}
\sup_{p\in(0,\,\cdot\,)}\Delta_n(F_{p})\ge \limsup_{p\to0}\Delta_n(F_{p})\ge\tfrac12.
\end{equation}
From~\eqref{lim_nDelta_n(F_a/n)} with $\alpha=\ell^{-2}$ we obtain
$$
\lowaex(g_0, \eps, \gamma) \ge \limsup_{\ell\to0}\limsup_{n\to\infty } \Delta_n(F_{p(\ell)}) / \ell
\ge
\tfrac12\lim_{\alpha\to\infty} \sqrt\alpha e^{-\alpha}I_0(\alpha) = \frac{1}{2\sqrt{2\pi}},\quad \eps,\gamma>0.
$$
Inequalities~\eqref{lim_nDelta_n(F_a/n)} and \eqref{sup_pDelta_n(F_p)} imply that
$$
\upaexcond(g_0, \eps, \gamma) 
\ge \max\Big\{\sup_{0 < \ell < \eps\wedge1} \limsup_{n \rightarrow \infty} \Delta_n(F_{p(\ell)}) / \ell, \ \limsup_{n \rightarrow \infty} \sup_{0 < p \le (\eps^{-2}\vee1)/n}\Delta_n(F_{p}) / (\eps\wedge1)\Big\}\ge
$$
$$
\ge \max\Big\{\sup_{\alpha>\eps^{-2}\vee1} \tfrac12\sqrt\alpha e^{-\alpha}I_0(\alpha),\
\tfrac12(\eps^{-1}\vee1)\Big\},
$$
$$
\upaexcond(g_1, \eps, \gamma) \ge 
\begin{cases}
\displaystyle
\limsup_{n \rightarrow \infty} \sup_{0 < p < 1}\Delta_n(F_{p})\ge\frac12,
&\eps\le1,
\\
\displaystyle
\sup_{1<\ell<\eps}\limsup_{n\to\infty}\Delta_n(F_{p(\ell)})/\ell\ge \sup_{\eps^{-2}<\alpha<1}\tfrac{1}{2}\sqrt\alpha e^{-\alpha}I_0(\alpha),
&\eps>1.
\end{cases}
$$
The plot of the function $f(\alpha)=\sqrt{\alpha}e^{-\alpha}I_0(\alpha)$ looks monotonically increasing for $\alpha\le0.78$ and monotonically decreasing for $\alpha\ge0.79\eqqcolon\alpha_*$ with $f(\alpha_*)>0.4688,$ therefore it is reasonable to estimate upper bounds $\sup_\alpha f(\alpha)$ from below as
$$
\sup_{\alpha>\eps^{-2}\vee1}f(\alpha)\ge f(1)=e^{-1}I_0(1)=0.4657\ldots,
$$
$$
\sup_{\eps^{-2}<\alpha<1}f(\alpha)\ge
\begin{cases}
 f(\alpha_*)>0.4688,& \eps^{-2}<\alpha_*,\quad\Leftrightarrow\quad \eps>1/\sqrt{\alpha_*}=1.1250\ldots,
\\
f(\eps^{-2})=\exp\{-\eps^{-2}\}I_0(\eps^{-2}),& \eps^{-2}\ge\alpha_*,\quad\Leftrightarrow\quad \eps\le1/\sqrt{\alpha_*}.
\end{cases}
$$
Hence, we finally obtain
$$
\upaexcond(g_0, \eps, \gamma) 
\ge \tfrac12\max\{0.4657,\eps^{-1},1\}=\tfrac12(\eps^{-1}\vee1),
$$
$$
\upaexcond(g_1, \eps, \gamma) \ge 
\begin{cases}
\tfrac12\exp\{-\eps^{-2}\}I_0(\eps^{-2}),
&1<\eps\le1.1251,
\\
\tfrac12\cdot0.4688=0.2344,&\eps\ge1.1251.\qquad\qedhere
\end{cases}
$$
\end{proof}

\section{Conclusion}

In the present work we introduced a detailed classification of the exact and asymptotically exact constants in natural convergence rate estimates in the Lindeberg's theorem like Esseen's and Rozovskii's inequalities. We found the lower bounds of the exact (universal) constants and the most optimistic absolute constants. The obtained lower bounds turned to be rather close to the upper ones which justifies the high accuracy of the latest.  Also we constructed the lower bounds for the asymptotically best, the lower asymptotically exact and the conditional upper asymptotically exact  constants. With the account of the previously known upper estimates of the asymptotically exact constants, this allowed to obtain two-sided bounds for all the introduced asymptotic constants.

\bibliography{../../../bib/my_pub,../../../bib/biblio}
\bibliographystyle{plain}

\end{document}